\theoremstyle{plain} 
\newtheorem{theorem}{\indent\sc Theorem}[section] 
\newtheorem{proposition}[theorem]{\indent\sc Proposition}
\theoremstyle{definition} 
\begin{document}

\title[Coupled Painlev\'e III systems]{Coupled Painlev\'e III systems with affine Weyl group symmetry of types $B_5^{(1)},D_5^{(1)}$ and $D_6^{(2)}$ \\}
\author{Yusuke Sasano }

\renewcommand{\thefootnote}{\fnsymbol{footnote}}
\footnote[0]{2000\textit{ Mathematics Subjet Classification}.
Primary 34M55; Secondary 34M45.}

\keywords{ 
Affine Weyl group, birational symmetry, coupled Painlev\'e system.}
\maketitle

\begin{abstract}
We find and study four kinds of five-parameter family of six-dimensional coupled Painlev\'e III systems with affine Weyl group symmetry of types $D_5^{(1)},B_5^{(1)}$ and $D_6^{(2)}$. We show that each system is equivalent by an explicit birational and symplectic transformation, respectively. We also show that we characterize each system from the viewpoint of holomorphy.
\end{abstract}

\section{Introduction}

In \cite{Sasa1,Sasa2,Sasa5,Sasa3,Sasa4}, we presented some types of coupled Painlev\'e systems with various affine Weyl group symmetries. In this paper, we present a 5-parameter family of coupled Painlev\'e III systems with affine Weyl group symmetry of type $D_5^{(1)}$, which is explicitly given by
\begin{equation}\label{1}
\frac{dx}{dt}=\frac{\partial H}{\partial y}, \quad \frac{dy}{dt}=-\frac{\partial H}{\partial x}, \quad \frac{dz}{dt}=\frac{\partial H}{\partial w}, \quad \frac{dw}{dt}=-\frac{\partial H}{\partial z}, \quad \frac{dq}{dt}=\frac{\partial H}{\partial p}, \quad \frac{dp}{dt}=-\frac{\partial H}{\partial q}
\end{equation}
with the Hamiltonian
\begin{align}\label{2}
\begin{split}
H &=H_1(x,y,t;\alpha_0,\alpha_1)+H_{III}^{D_7^{(1)}}(z,w,t;\alpha_0+\alpha_1+2\alpha_2)\\
&+H_4(q,p,t;\alpha_4,\alpha_5)+\frac{2(xz-wp)}{t}\\
&=\frac{x^2y^2+xy^2-(\alpha_0+\alpha_1)xy-\alpha_0y}{t}+\frac{z^2w^2+(\alpha_0+\alpha_1+2\alpha_2)zw+z+tw}{t}\\
&+\frac{q^2p^2-tq^2p-(1-\alpha_4-\alpha_5)qp-\alpha_4tq}{t}+\frac{2(xz-wp)}{t}.
\end{split}
\end{align}
Here $x,y,z,w,q$ and $p$ denote unknown complex variables, and $\alpha_0,\alpha_1, \dots ,\alpha_5$ are complex parameters satisfying the relation:
\begin{equation}\label{3}
\alpha_0+\alpha_1+2\alpha_2+2\alpha_3+\alpha_4+\alpha_5=1.
\end{equation}
The symbols $H_1,H_2,H_3,H_4$ and $H_{III}^{D_7^{(1)}}$ denote the polynomial Hamiltonians explicitly given as follows:
\begin{align}
H_1&=H_1(q,p,t;\alpha_0,\alpha_1)=\frac{q^2p^2+qp^2-(\alpha_0+\alpha_1)qp-\alpha_0p}{t} \quad (\alpha_0-\alpha_1+2\alpha_2=0),
\end{align}
\begin{align}
H_2&=H_2(q,p,t;\alpha_2)=\frac{q^2p^2+(1-2\alpha_2)qp+tp}{t} \quad (\alpha_0-\alpha_1+2\alpha_2=0),\\
H_3&=H_3(q,p,t;\alpha_2)=\frac{q^2p^2+2\alpha_2qp-q}{t} \quad (\alpha_0-\alpha_1+2\alpha_2=0),\\
H_4&=H_4(q,p,t;\alpha_0,\alpha_1)=\frac{q^2p^2-tq^2p-(1-\alpha_0-\alpha_1)qp-\alpha_0tq}{t} \quad (\alpha_0-\alpha_1+2\alpha_2=0),\\
H_{III}^{D_7^{(1)}}&=H_{III}^{D_7^{(1)}}(q,p,t;\alpha_1)=\frac{q^2p^2+\alpha_1 qp+q+tp}{t} \quad (\alpha_0+\alpha_1=1).
\end{align}
We remark that for $y=q/{\tau}, \ t={\tau}^2$  the Hamiltonian system with $H_{III}^{D_7^{(1)}}$ is the special case of the third Painlev\'e system (see \cite{T}):
\begin{equation}
\frac{d^2y}{d{\tau}^2}=\frac{1}{y}\left(\frac{dy}{d{\tau}}\right)^2-\frac{1}{\tau}\frac{dy}{d{\tau}}+\frac{1}{\tau}(ay^2+b)+cy^3+\frac{d}{y}
\end{equation}
with
\begin{equation}
a=-8, \quad b=4(1-\alpha_1), \quad c=0, \quad d=-4.
\end{equation}

From the viewpoint of symmetry, the Hamiltonian system
\begin{equation}
\frac{dq}{dt}=\frac{\partial H_{III}^{D_7^{(1)}}}{\partial p}, \quad \frac{dp}{dt}=-\frac{\partial H_{III}^{D_7^{(1)}}}{\partial q}
\end{equation}
 has extended affine Weyl group symmetry of type $A_1^{(1)}$, whose generators $<s_0,s_1,\pi=\sigma \circ s_1>$ are explicitly given as follows (see \cite{T}):
\begin{equation}
  \left\{
  \begin{aligned}
   s_0(q,p,t;\alpha_0,\alpha_1) &=(q,p+\frac{\alpha_0}{q}-\frac{t}{q^2},-t;-\alpha_0,\alpha_1+2\alpha_0),\\
   s_1(q,p,t;\alpha_0,\alpha_1) &=(-q+\frac{\alpha_1}{p}+\frac{1}{p^2},-p,-t;\alpha_0+2\alpha_1,-\alpha_1),\\
   \sigma(q,p,t;\alpha_0,\alpha_1) &=(tp,-\frac{q}{t},-t;\alpha_1,\alpha_0).
   \end{aligned}
  \right. 
\end{equation}

\begin{proposition}\label{pro:1}
By the following birational and symplectic transformations $tr_i \ (i=1,2,3)${\rm : \rm}
\begin{equation}\label{0.1}
  \left\{
  \begin{aligned}
   tr_1(q,p) &=(t/p,(qp-\alpha_0)p/t),\\
   tr_2(q,p) &=(-tp,q/t),\\
   tr_3(q,p) &=(p/t,-tq),
   \end{aligned}
  \right. 
\end{equation}
the Hamiltonians $H_1,H_2,H_3$ and $H_4$ satisfy the following relations{\rm : \rm}
\begin{equation}
tr_1(H_1)=H_2, \quad tr_2 \circ tr_1(H_1)=H_3, \quad tr_3(H_1)=H_4.
\end{equation}
\end{proposition}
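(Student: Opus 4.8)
The plan is to treat each $tr_i$ as a $t$-dependent symplectic transformation of the pair $(q,p)$ and to verify every Hamiltonian identity by substitution supplemented with the correction term forced by the explicit dependence on $t$.

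First I would check that each $tr_i$ is genuinely symplectic and birational. Writing $(\bar q,\bar p)=tr_i(q,p)$ and computing the two-form at fixed $t$, one gets $d\bar q\wedge d\bar p=dq\wedge dp$ in every case; for $tr_1$ this is $\left(-\tfrac{t}{p^{2}}\,dp\right)\wedge\left(\tfrac{p^{2}}{t}\,dq+\cdots\right)=dq\wedge dp$, and for $tr_2,tr_3$ it is immediate since they are rescaled swaps $(q,p)\mapsto(\mp tp,\pm q/t)$. Birationality is visible from the formulas.

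Next, since the maps involve $t$ explicitly, the push-forward of $H$ alone is not the transformed Hamiltonian: the canonical rule demands that the extended form $p\,dq-H\,dt$ be preserved up to an exact differential, so the transformed Hamiltonian is $H\circ tr_i^{-1}+R_i$, where $R_i$ is (minus) the $dt$-component of $p\,dq-\bar p\,d\bar q$. I would compute this one-form for each map; e.g.\ $p\,dq-\bar p\,d\bar q=d(\bar q\bar p+\alpha_0\log\bar q)-\tfrac{\bar q\bar p+\alpha_0}{t}\,dt$ for $tr_1$, giving $R_1=\tfrac{\bar q\bar p+\alpha_0}{t}$, and likewise $R_2=\tfrac{\bar q\bar p}{t}$, $R_3=-\tfrac{\bar q\bar p}{t}$. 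Only the $dt$-coefficient is needed, so the non-rational term $\alpha_0\log\bar q$ is irrelevant.

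The verification then collapses to a short substitution, the key simplification being that the product $qp$ becomes affine under each map: $tr_1$ sends $qp\mapsto\bar q\bar p+\alpha_0$ while $tr_2,tr_3$ send $qp\mapsto-\bar q\bar p$, so $q^{2}p^{2}$ and the lower monomials reduce to at most quadratic expressions in $(\bar q,\bar p)$. Substituting into $H_1$ (and into $H_2$ for the middle identity), adding $R_i$, and using $\alpha_0-\alpha_1+2\alpha_2=0$ in the form $\alpha_0-\alpha_1=-2\alpha_2$, I obtain $H_2$, $H_3$ and $H_4$; the middle case then reads $tr_2\circ tr_1(H_1)=tr_2(H_2)=H_3$. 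The only delicate point, and the main obstacle, is the bookkeeping of the $t$-dependence: one must remember to add $R_i$ rather than merely pull $H$ back, and one must allow an additive function of $t$ alone. For instance the bare push-forward of $H_1$ under $tr_1$ differs from $H_2$ by $-\tfrac{\bar q\bar p}{t}$ together with a $t$-only term; it is exactly $R_1$ that restores the missing $\tfrac{\bar q\bar p}{t}$, the residual function of $t$ being harmless for the flow and absorbable into the gauge freedom of $R_i$.
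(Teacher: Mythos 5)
Your proposal is correct. The paper itself offers no proof of Proposition \ref{pro:1} --- it is stated as a fact, in the same spirit as the later theorems that are said to be ``checked by a direct calculation'' --- so the comparison is with an implicit brute-force substitution. What your argument supplies, and what a bare substitution would miss, is the treatment of the explicit $t$-dependence: since each $tr_i$ involves $t$, the transformed Hamiltonian is $H\circ tr_i^{-1}+R_i$ with $R_i$ read off from the $dt$-component of $p\,dq-\bar p\,d\bar q$, and without $R_i$ the claimed identities are false as identities of functions (indeed $H_1\circ tr_1^{-1}=H_2-\bar q\bar p/t-\alpha_0\alpha_1/t$). Your corrections $R_1=(\bar q\bar p+\alpha_0)/t$, $R_2=\bar q\bar p/t$, $R_3=-\bar q\bar p/t$ check out, and with them the computations close: $tr_2(H_2)=H_3$ and $tr_3(H_1)=H_4$ come out exactly, while $tr_1(H_1)=H_2$ holds up to the additive term $\alpha_0(1-\alpha_1)/t$, a function of $t$ alone that is invisible to the flow and is removable by the gauge freedom in $R_1$ (choosing $R_1=(\bar q\bar p+\alpha_0\alpha_1)/t$ makes the identity exact), exactly as you say. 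The one sentence I would add for the record is that the equalities of the proposition are to be read as equalities of Hamiltonian systems, i.e.\ of Hamiltonians modulo additive functions of $t$; that is the precise sense in which the statement is true, and it is exactly what your proof establishes.
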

By Proposition \ref{pro:1}, we see that each Hamiltonian $H_i \ (i=1,2,3,4)$ is  equivalent by the transformations \eqref{0.1}.

The Hamiltonian system with $H_1$ has the first integral.
\begin{proposition}
The system with the Hamiltonian $H_1$
\begin{equation}
\frac{dq}{dt}=\frac{\partial H_1}{\partial p}, \quad \frac{dp}{dt}=-\frac{\partial H_1}{\partial q}
\end{equation}
has the first integral $I${\rm : \rm}
\begin{equation}
I=q^2p^2+qp^2-(\alpha_0+\alpha_1)qp-\alpha_0p.
\end{equation}
\end{proposition}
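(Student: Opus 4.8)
The whole statement rests on one structural observation: the proposed first integral is nothing but $t$ times the Hamiltonian, namely $I=tH_1$, and the polynomial $I=q^2p^2+qp^2-(\alpha_0+\alpha_1)qp-\alpha_0p$ carries no explicit dependence on the time variable $t$. The plan is therefore to exploit this relation rather than to differentiate the long polynomial expression blindly. Along any solution of the system the total derivative of $I$ splits as
\begin{equation}
\frac{dI}{dt}=\frac{\partial I}{\partial t}+\frac{\partial I}{\partial q}\frac{dq}{dt}+\frac{\partial I}{\partial p}\frac{dp}{dt},
\end{equation}
and since $I$ is free of $t$ the first term drops out, leaving only the Poisson-bracket part.

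Next I would substitute the equations of motion $dq/dt=\partial H_1/\partial p$ and $dp/dt=-\partial H_1/\partial q$ to recognize the surviving terms as the Poisson bracket $\{I,H_1\}$ in the variables $(q,p)$. Because $t$ enters $H_1$ only as an overall scalar factor, we have $\{I,H_1\}=\{tH_1,H_1\}=t\{H_1,H_1\}=0$, since the bracket of any function with itself vanishes identically; hence $dI/dt=0$ along every trajectory, which is exactly the assertion. Equivalently, one may argue by the product rule: for a Hamiltonian flow $dH_1/dt=\partial H_1/\partial t$, and since $H_1=I(q,p)/t$ gives $\partial H_1/\partial t=-I/t^2=-H_1/t$, one finds $dI/dt=H_1+t\,(dH_1/dt)=H_1-H_1=0$.

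If one prefers a self-contained check that avoids invoking the identity $dH_1/dt=\partial H_1/\partial t$, it suffices to record the partial derivatives $\partial I/\partial q=2qp^2+p^2-(\alpha_0+\alpha_1)p$ and $\partial I/\partial p=2q^2p+2qp-(\alpha_0+\alpha_1)q-\alpha_0$, to note that $t\,\partial H_1/\partial q=\partial I/\partial q$ and $t\,\partial H_1/\partial p=\partial I/\partial p$, and to substitute these into the displayed formula for $dI/dt$; the two contributions then cancel termwise. There is no genuine obstacle here—the only point demanding care is the explicit time-dependence, and the single idea carrying the proof is the recognition that $I=tH_1$. I would also remark that the same mechanism shows that $tH$ is conserved for every Hamiltonian of the homogeneous form $H=f(q,p)/t$, which is presumably why the first integral takes precisely this shape.
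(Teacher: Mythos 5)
Your proof is correct and follows essentially the same route as the paper, whose entire justification is the remark that $tH_1=I$ (i.e.\ that $H_1$ has the form $f(q,p)/t$ with $f$ independent of $t$), the verification being left implicit. Your write-up simply makes explicit the standard computation $\frac{dI}{dt}=\{I,H_1\}=t\{H_1,H_1\}=0$ on which that remark rests.
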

We see that the relation between the Hamiltonian $H_1$ and the first integral $I$ is explicitly given by
\begin{equation}
tH_1=I.
\end{equation}

The B{\"a}cklund transformations of this system satisfy Noumi-Yamada's universal description for $D_5^{(1)}$ root system (see \cite{NY1}). Since these universal B{\"a}cklund transformations have Lie theoretic origin, similarity reduction of a Drinfeld-Sokolov hierarchy admits such a B{\"a}cklund symmetry. The aim of this paper is to introduce the system of type $D_5^{(1)}$.

\begin{figure}[h]
\unitlength 0.1in
\begin{picture}(36.83,8.20)(0.27,-19.70)
\put(1.2000,-13.2000){\makebox(0,0)[lb]{Our discovery of the system}}%
\put(1.2000,-15.1000){\makebox(0,0)[lb]{of type $D_5^{(1)}$}}%
%
\put(0.2700,-14.2500){\makebox(0,0)[lb]{}}%
\put(24.1000,-14.4400){\makebox(0,0)[lb]{$W(D_5^{(1)})$}}%
%
\special{pn 20}%
\special{sh 0.600}%
\special{ar 403 1675 12 20  0.0000000 6.2831853}%
\put(4.7300,-17.6200){\makebox(0,0)[lb]{Noumi-Yamada's universal description}}%
\put(37.0000,-17.2000){\makebox(0,0)[lb]{Drinfeld-Sokolov}}%
\put(4.6700,-19.6100){\makebox(0,0)[lb]{for $D_5^{(1)}$ root system}}%
%
\special{pn 8}%
\special{pa 280 1583}%
\special{pa 280 1970}%
\special{fp}%
\special{pa 280 1550}%
\special{pa 3360 1550}%
\special{fp}%
%
\special{pn 8}%
\special{pa 280 1970}%
\special{pa 3351 1970}%
\special{fp}%
%
\special{pn 8}%
\special{pa 3351 1550}%
\special{pa 3351 1959}%
\special{fp}%
%
\special{pn 20}%
\special{pa 2120 1270}%
\special{pa 2148 1246}%
\special{pa 2176 1225}%
\special{pa 2204 1209}%
\special{pa 2232 1201}%
\special{pa 2261 1202}%
\special{pa 2290 1213}%
\special{pa 2317 1234}%
\special{pa 2340 1260}%
\special{pa 2359 1291}%
\special{pa 2371 1324}%
\special{pa 2377 1358}%
\special{pa 2376 1391}%
\special{pa 2366 1421}%
\special{pa 2346 1446}%
\special{pa 2320 1465}%
\special{pa 2289 1478}%
\special{pa 2257 1483}%
\special{pa 2226 1483}%
\special{pa 2194 1479}%
\special{pa 2162 1473}%
\special{pa 2150 1470}%
\special{sp}%
%
\special{pn 20}%
\special{pa 2190 1480}%
\special{pa 2070 1420}%
\special{fp}%
\special{sh 1}%
\special{pa 2070 1420}%
\special{pa 2121 1468}%
\special{pa 2118 1444}%
\special{pa 2139 1432}%
\special{pa 2070 1420}%
\special{fp}%
%
\special{pn 20}%
\special{pa 3440 1751}%
\special{pa 3680 1751}%
\special{fp}%
\special{sh 1}%
\special{pa 3680 1751}%
\special{pa 3613 1731}%
\special{pa 3627 1751}%
\special{pa 3613 1771}%
\special{pa 3680 1751}%
\special{fp}%
\put(37.1000,-19.0000){\makebox(0,0)[lb]{hierarchy?}}%
\end{picture}%
\label{fig:D51}
\caption{}
\end{figure}

\noindent
We also show that this system coincides with some types of 5-parameter family of six-dimensional coupled Painlev\'e III systems with extended affine Weyl group symmetry of types $B_5^{(1)}$ and $D_6^{(2)}$. Moreover, we show the relationship between the system of type $D_5^{(1)}$ and the system of types $B_5^{(1)}$ and $D_6^{(2)}$ by an explicit birational and symplectic transformation, respectively.

This paper is organized as follows. In Section 2, we will introduce the system of type $D_5^{(1)}$ and its B{\"a}cklund transformations. In Section 3, we will propose two types of a 5-parameter family of coupled Painlev\'e III systems in dimension six with extended affine Weyl group symmetry of type $B_5^{(1)}$. We also show that each of them is equivalent to the system \eqref{1} by a birational and symplectic transformation, respectively. In Section 4, we will propose a 5-parameter family of coupled Painlev\'e III systems in dimension six with extended affine Weyl group symmetry of type $D_6^{(2)}$ and its B{\"a}cklund transformations. We also show that this system is equivalent to the system \eqref{1} by a birational and symplectic transformation.

\section{The system of type $D_5^{(1)}$}

In this section, we present a 5-parameter family of polynomial Hamiltonian systems that can be considered as six-dimensional coupled Painlev\'e III systems given by
\begin{equation}\label{4}
  \left\{
  \begin{aligned}
   \frac{dx}{dt} &=\frac{2x^2y+2xy-(\alpha_0+\alpha_1)x-\alpha_0}{t},\\
   \frac{dy}{dt} &=-\frac{2xy^2+y^2-(\alpha_0+\alpha_1)y+2z}{t},\\
   \frac{dz}{dt} &=\frac{2z^2w+(\alpha_0+\alpha_1+2\alpha_2)z+t-2p}{t},\\
   \frac{dw}{dt} &=-\frac{2zw^2+(\alpha_0+\alpha_1+2\alpha_2)w+1+2x}{t},\\
   \frac{dq}{dt} &=\frac{2q^2p-tq^2-(1-\alpha_4-\alpha_5)q-2w}{t},\\
   \frac{dp}{dt} &=\frac{-2qp^2+2tqp+(1-\alpha_4-\alpha_5)p+\alpha_4t}{t}
   \end{aligned}
  \right. 
\end{equation}
with the Hamiltonian \eqref{2}.

\begin{theorem}\label{2.1}
The system \eqref{4} admits extended affine Weyl group symmetry of type $D_5^{(1)}$ as the group of its B{\"a}cklund transformations {\rm(cf. \cite{NY2,S,T}),\rm} whose generators are explicitly given as follows{\rm : \rm}with the notation $(*):=(x,y,z,w,q,p,t;\alpha_0,\alpha_1, \dots ,\alpha_5),$
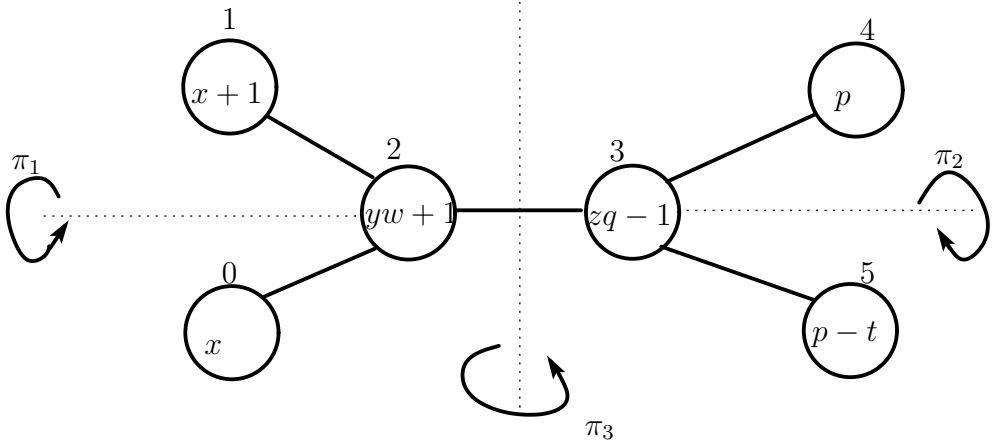
\begin{figure}[h]
\unitlength 0.1in
\begin{picture}(51.30,21.92)(13.00,-25.52)
%
\special{pn 20}%
\special{ar 2462 834 244 244  0.0000000 6.2831853}%
%
\special{pn 20}%
\special{ar 2473 2121 244 244  0.0000000 6.2831853}%
%
\special{pn 20}%
\special{ar 3397 1494 244 244  0.0000000 6.2831853}%
%
\special{pn 20}%
\special{pa 2660 988}%
\special{pa 3210 1307}%
\special{fp}%
%
\special{pn 20}%
\special{pa 2638 1934}%
\special{pa 3221 1681}%
\special{fp}%
\put(23.3000,-22.3000){\makebox(0,0)[lb]{$x$}}%
\put(22.6000,-9.3000){\makebox(0,0)[lb]{$x+1$}}%
\put(31.7000,-15.8000){\makebox(0,0)[lb]{$yw+1$}}%
\put(24.2000,-18.6000){\makebox(0,0)[lb]{$0$}}%
\put(24.2000,-5.3000){\makebox(0,0)[lb]{$1$}}%
\put(32.8000,-12.1000){\makebox(0,0)[lb]{$2$}}%
%
\special{pn 20}%
\special{pa 3640 1480}%
\special{pa 4300 1480}%
\special{fp}%
%
\special{pn 20}%
\special{ar 4570 1490 244 244  0.0000000 6.2831853}%
%
\special{pn 20}%
\special{pa 4750 1330}%
\special{pa 5520 980}%
\special{fp}%
%
\special{pn 20}%
\special{pa 4720 1670}%
\special{pa 5520 1950}%
\special{fp}%
%
\special{pn 20}%
\special{ar 5740 850 244 244  0.0000000 6.2831853}%
%
\special{pn 20}%
\special{ar 5710 2110 244 244  0.0000000 6.2831853}%
\put(43.3000,-15.9000){\makebox(0,0)[lb]{$zq-1$}}%
\put(55.1000,-22.0000){\makebox(0,0)[lb]{$p-t$}}%
\put(56.3000,-9.6000){\makebox(0,0)[lb]{$p$}}%
\put(44.5000,-12.2000){\makebox(0,0)[lb]{$3$}}%
\put(57.6000,-18.6000){\makebox(0,0)[lb]{$5$}}%
\put(57.6000,-5.9000){\makebox(0,0)[lb]{$4$}}%
%
\special{pn 8}%
\special{pa 1490 1510}%
\special{pa 3120 1510}%
\special{dt 0.045}%
\special{pa 3120 1510}%
\special{pa 3119 1510}%
\special{dt 0.045}%
%
\special{pn 8}%
\special{pa 4810 1480}%
\special{pa 6350 1480}%
\special{dt 0.045}%
\special{pa 6350 1480}%
\special{pa 6349 1480}%
\special{dt 0.045}%
%
\special{pn 20}%
\special{pa 1566 1408}%
\special{pa 1550 1377}%
\special{pa 1532 1349}%
\special{pa 1510 1327}%
\special{pa 1484 1313}%
\special{pa 1452 1310}%
\special{pa 1418 1315}%
\special{pa 1386 1327}%
\special{pa 1357 1344}%
\special{pa 1335 1366}%
\special{pa 1319 1392}%
\special{pa 1307 1422}%
\special{pa 1301 1454}%
\special{pa 1299 1488}%
\special{pa 1302 1523}%
\special{pa 1308 1560}%
\special{pa 1318 1596}%
\special{pa 1331 1633}%
\special{pa 1346 1668}%
\special{pa 1364 1701}%
\special{pa 1385 1728}%
\special{pa 1410 1745}%
\special{pa 1441 1747}%
\special{pa 1474 1734}%
\special{pa 1502 1711}%
\special{pa 1516 1681}%
\special{pa 1516 1668}%
\special{sp}%
%
\special{pn 20}%
\special{pa 6070 1440}%
\special{pa 6090 1406}%
\special{pa 6111 1374}%
\special{pa 6132 1344}%
\special{pa 6153 1319}%
\special{pa 6176 1299}%
\special{pa 6199 1285}%
\special{pa 6224 1280}%
\special{pa 6250 1283}%
\special{pa 6276 1295}%
\special{pa 6303 1313}%
\special{pa 6328 1336}%
\special{pa 6353 1364}%
\special{pa 6375 1395}%
\special{pa 6394 1427}%
\special{pa 6409 1461}%
\special{pa 6421 1494}%
\special{pa 6428 1527}%
\special{pa 6430 1560}%
\special{pa 6428 1592}%
\special{pa 6420 1623}%
\special{pa 6407 1654}%
\special{pa 6388 1683}%
\special{pa 6363 1710}%
\special{pa 6335 1731}%
\special{pa 6307 1740}%
\special{pa 6281 1734}%
\special{pa 6258 1714}%
\special{pa 6237 1686}%
\special{pa 6220 1660}%
\special{sp}%
%
\special{pn 20}%
\special{pa 6230 1680}%
\special{pa 6180 1600}%
\special{fp}%
\special{sh 1}%
\special{pa 6180 1600}%
\special{pa 6198 1667}%
\special{pa 6208 1645}%
\special{pa 6232 1646}%
\special{pa 6180 1600}%
\special{fp}%
\put(13.2000,-12.9000){\makebox(0,0)[lb]{$\pi_1$}}%
\put(61.5000,-12.6000){\makebox(0,0)[lb]{$\pi_2$}}%
%
\special{pn 20}%
\special{pa 1520 1680}%
\special{pa 1600 1560}%
\special{fp}%
\special{sh 1}%
\special{pa 1600 1560}%
\special{pa 1546 1604}%
\special{pa 1570 1604}%
\special{pa 1580 1627}%
\special{pa 1600 1560}%
\special{fp}%
%
\special{pn 8}%
\special{pa 3980 390}%
\special{pa 3980 2510}%
\special{dt 0.045}%
\special{pa 3980 2510}%
\special{pa 3980 2509}%
\special{dt 0.045}%
\put(43.2000,-26.8000){\makebox(0,0)[lb]{$\pi_3$}}%
%
\special{pn 20}%
\special{pa 3870 2190}%
\special{pa 3835 2200}%
\special{pa 3800 2210}%
\special{pa 3768 2223}%
\special{pa 3739 2238}%
\special{pa 3715 2256}%
\special{pa 3696 2279}%
\special{pa 3684 2306}%
\special{pa 3679 2338}%
\special{pa 3682 2372}%
\special{pa 3692 2404}%
\special{pa 3710 2433}%
\special{pa 3734 2460}%
\special{pa 3765 2483}%
\special{pa 3800 2502}%
\special{pa 3839 2519}%
\special{pa 3880 2532}%
\special{pa 3924 2542}%
\special{pa 3968 2548}%
\special{pa 4012 2551}%
\special{pa 4054 2551}%
\special{pa 4094 2548}%
\special{pa 4131 2541}%
\special{pa 4164 2531}%
\special{pa 4191 2517}%
\special{pa 4212 2500}%
\special{pa 4225 2480}%
\special{pa 4230 2456}%
\special{pa 4227 2429}%
\special{pa 4216 2400}%
\special{pa 4201 2369}%
\special{pa 4184 2337}%
\special{pa 4180 2330}%
\special{sp}%
%
\special{pn 20}%
\special{pa 4170 2320}%
\special{pa 4140 2270}%
\special{fp}%
\special{sh 1}%
\special{pa 4140 2270}%
\special{pa 4157 2337}%
\special{pa 4167 2316}%
\special{pa 4191 2317}%
\special{pa 4140 2270}%
\special{fp}%
\end{picture}%
\label{fig:D52}
\caption{Dynkin diagram of type $D_5^{(1)}$}
\end{figure}
\begin{align*}
        s_{0}: (*) &\rightarrow (x,y-\frac{\alpha_0}{x},z,w,q,p,t;-\alpha_0,\alpha_1,\alpha_2+\alpha_0,\alpha_3,\alpha_4,\alpha_5), \\
        s_{1}: (*) &\rightarrow (x,y-\frac{\alpha_1}{x+1},z,w,q,p,t;\alpha_0,-\alpha_1,\alpha_2+\alpha_1,\alpha_3,\alpha_4,\alpha_5), 
\end{align*}
\begin{align*}
        s_{2}: (*) &\rightarrow  (x+\frac{\alpha_2w}{yw+1},y,z+\frac{\alpha_2y}{yw+1},w,q,p,t;\alpha_0+\alpha_2,\alpha_1+\alpha_2,-\alpha_2,\alpha_3+\alpha_2,\alpha_4,\alpha_5), \\
        s_{3}: (*) &\rightarrow (x,y,z,w-\frac{\alpha_3q}{zq-1},q,p-\frac{\alpha_3z}{zq-1},t;\alpha_0,\alpha_1,\alpha_2+\alpha_3,-\alpha_3,\alpha_4+\alpha_3,\alpha_5+\alpha_3), \\
        s_{4}: (*) &\rightarrow (x,y,z,w,q+\frac{\alpha_4}{p},p,t;\alpha_0,\alpha_1,\alpha_2,\alpha_3+\alpha_4,-\alpha_4,\alpha_5), \\
        s_{5}: (*) &\rightarrow (x,y,z,w,q+\frac{\alpha_5}{p-t},p,t;\alpha_0,\alpha_1,\alpha_2,\alpha_3+\alpha_5,\alpha_4,-\alpha_5), \\
        \pi_1: (*) &\rightarrow (-x-1,-y,-z,-w,-q,-p,-t;\alpha_1,\alpha_0,\alpha_2,\alpha_3,\alpha_4,\alpha_5), \\
        \pi_2: (*) &\rightarrow (x,y,z,w,q,p-t,-t;\alpha_0,\alpha_1,\alpha_2,\alpha_3,\alpha_5,\alpha_4), \\
        \pi_3: (*) &\rightarrow (\frac{(p-t)}{t},-tq,-tw,\frac{z}{t},\frac{y}{t},-t(x+1),-t;\alpha_5,\alpha_4,\alpha_3,\alpha_2,\alpha_1,\alpha_0).
\end{align*}
\end{theorem}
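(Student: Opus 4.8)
The plan is to split the statement into two logically independent verifications: (i) that each of the eleven generators $s_0,\dots,s_5,\pi_1,\pi_2,\pi_3$ is by itself a Bäcklund transformation of the system \eqref{4}, and (ii) that these generators satisfy exactly the defining relations of the extended affine Weyl group of type $D_5^{(1)}$, namely the Coxeter relations read off from the Dynkin diagram together with the relations that make $\pi_1,\pi_2,\pi_3$ realize the diagram automorphisms. For (i) the criterion I would use is formulated on the extended phase space with coordinates $(x,y,z,w,q,p,t)$: writing $\tilde\omega_\alpha=dx\wedge dy+dz\wedge dw+dq\wedge dp-dH\wedge dt$ with $H$ the Hamiltonian \eqref{2} carrying parameters $\alpha=(\alpha_0,\dots,\alpha_5)$, a birational map $g$ with $g(\alpha)=\alpha'$ is a Bäcklund transformation precisely when $g^{*}\tilde\omega_{\alpha'}=\tilde\omega_{\alpha}$.

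The generators fall into two groups. The reflections $s_0,s_1,s_2,s_3,s_4$ leave $t$ fixed and preserve the phase two-form $\omega=dx\wedge dy+dz\wedge dw+dq\wedge dp$ exactly, so for them the criterion reduces to checking that $H$, after substitution of the new variables and new parameters, returns to the original $H$ up to a function of $t$ alone, which does not affect \eqref{4}. Here $s_0,s_1,s_4$ each move a single coordinate by a term $\alpha_i/(\text{linear})$ inside one subsystem and are one-line checks; the substantive cases are $s_2$ and $s_3$, which are the canonical shifts generated by $\alpha_2\log(yw+1)$ and by $-\alpha_3\log(zq-1)$ respectively, and which sit at the two junction nodes, so their verification must carry the coupling term $2(xz-wp)/t$ through the substitution and confirm that all non-polynomial pieces cancel. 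The remaining generators $s_5,\pi_1,\pi_2,\pi_3$ genuinely move $t$ (by $p\mapsto p-t$, by $t\mapsto -t$, or by the $t$-scalings in $\pi_3$), so $\omega$ is not preserved in isolation and one must test the full identity $g^{*}\tilde\omega_{\alpha'}=\tilde\omega_{\alpha}$, in which the extra $dt$-terms produced by the substitution are absorbed by the change in $dH\wedge dt$ together with the explicit $1/t$ prefactors of \eqref{2}. I would dispose of $s_5,\pi_1,\pi_2$ first and treat $\pi_3$ last.

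For (ii) I would verify the Coxeter relations dictated by the diagram: $s_i^{2}=1$ for every $i$; $s_is_j=s_js_i$ whenever the nodes $i,j$ are non-adjacent; and $s_is_js_i=s_js_is_j$ for the adjacent pairs $\{0,2\}$, $\{1,2\}$, $\{2,3\}$, $\{3,4\}$, $\{3,5\}$. I would then confirm that $\pi_1,\pi_2,\pi_3$ realize the diagram automorphisms through the conjugation identities $\pi_k s_i\pi_k^{-1}=s_{\sigma_k(i)}$ with $\sigma_1=(0\,1)$, $\sigma_2=(4\,5)$, $\sigma_3=(0\,5)(1\,4)(2\,3)$, together with the relations among the $\pi_k$ themselves. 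Since every generator is an explicit birational substitution, each relation is checked by composing the substitutions and confirming that they reduce to the identity, with the induced parameter permutations composing accordingly.

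I expect the main obstacle to lie in the junction reflections $s_2,s_3$ and in $\pi_3$. For $s_2$ and $s_3$ the difficulty is that the rational shifts with denominators $yw+1$ and $zq-1$ must be inserted into the complete Hamiltonian, coupling term included, and the cancellation of the non-polynomial contributions is exactly where a sign or coefficient error would hide. For $\pi_3$ the obstacle is simultaneously combinatorial and computational: one must track the inversion $t\mapsto -t$, the rescalings by $t$ of all six coordinates, the complete reversal of the parameter indices, and the interaction of these with the $1/t$ factors, and then verify that any residual discrepancy is a total $t$-derivative. These are in principle routine substitutions, but their length and the entanglement of the coupling term with the non-autonomous $t$-dependence make them the crux of the argument.
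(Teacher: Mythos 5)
Your proposal is correct and is essentially the paper's own (implicit) proof: the paper offers no argument for Theorem~\ref{2.1} beyond the direct calculation it invokes for the parallel Theorems~\ref{3.1}, \ref{3.4} and \ref{4.1}, and your plan---checking $g^{*}\tilde\omega_{\alpha'}=\tilde\omega_{\alpha}$ generator by generator on the extended phase space and then verifying the Coxeter and diagram-automorphism relations---is exactly that calculation, organized in the standard way of \cite{S,NY2}. Two small corrections that do not affect your structure: $s_5$ does not actually move $t$ (it is merely $t$-dependent, so it does still belong, as you placed it, with the generators requiring the full extended-form check rather than the $H\mapsto H+(\text{function of }t)$ shortcut), and under the sign convention you adopted for $s_2$ the canonical shift $s_3$ is generated by $+\alpha_3\log(zq-1)$ rather than $-\alpha_3\log(zq-1)$.
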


\begin{theorem}\label{2.2}
Let us consider a polynomial Hamiltonian system with Hamiltonian $H \in {\Bbb C}(t)[x,y,z,w,q,p]$. We assume that

$(A1) \ deg(H)=4$ with respect to $x,y,z,w,q,p$.

$(A2)$ This system becomes again a polynomial Hamiltonian system in each coordinate $r_i \ (i=0,1,4,5)${\rm : \rm}
\begin{align*}
r_0&:x_0=-(xy-\alpha_0)y, \ y_0=1/y, \ z_0=z, \ w_0=w, \ q_0=q, \ p_0=p, \\
r_1&:x_1=-((x+1)y-\alpha_1)y, \ y_1=1/y, \ z_1=z, \ w_1=w, \ q_1=q, \ p_1=p, \\
r_4&:x_4=x, \ y_4=y, \ z_4=z, \ w_4=w, \ q_4=1/q, \ p_4=-(pq+\alpha_4)q, \\
r_5&:x_5=x, \ y_5=y, \ z_5=z, \ w_5=w. \ q_5=1/q, \ p_5=-((p-t)q+\alpha_5)q.
\end{align*}

$(A3)$ In addition to the assumption $(A2)$, the Hamiltonian system in each coordinate system $(x_i,y_i,z_i,w_i,q_i,p_i) \ (i=0,4)$ becomes again a polynomial Hamiltonian system in each coordinate $r_i \ (i=2,3)$, respectively{\rm : \rm}
\begin{align*}
r_2&:x_2=1/x_0, \ y_2=-((y_0+w_0)x_0+\alpha_2)x_0, \ z_2=z_0-x_0, \ w_2=w_0, \ q_2=q_0, \ p_2=p_0, \\
r_3&:x_3=x_4, \ y_3=y_4, \ z_3=-((z_4-q_4)w_4-\alpha_3)w_4, \ w_3=1/w_4, \ q_3=q_4, \ p_3=p_4+w_4.
\end{align*}
Then such a system coincides with the system \eqref{4}.
\end{theorem}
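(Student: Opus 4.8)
The plan is to read the three assumptions as a single holomorphy requirement that rigidifies $H$ completely, in the spirit of the coordinate-patch (Takano/Noumi--Yamada) method. First I would use (A1) to write the unknown Hamiltonian as
$H=\sum_{|I|\le 4} a_I(t)\,x^{i_1}y^{i_2}z^{i_3}w^{i_4}q^{i_5}p^{i_6}$
with finitely many unknown coefficient functions $a_I\in{\Bbb C}(t)$, $I=(i_1,\dots,i_6)$. Each $r_i$ is a birational map that is symplectic for fixed $t$, i.e.\ it preserves $dx_i\wedge dy_i+dz_i\wedge dw_i+dq_i\wedge dp_i$ on its patch, so it carries Hamiltonian systems to Hamiltonian systems. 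For the $t$-independent patches $r_0,r_1,r_4$ and the derived patches $r_2,r_3$ the transformed Hamiltonian is just the pullback $r_i^\ast H$; for $r_5$, which depends on $t$ through $p_5=-((p-t)q+\alpha_5)q$, the transformed Hamiltonian equals $r_5^\ast H$ plus an explicit correction term determined by the generating function of the transformation. Thus (A2) and (A3) say precisely that, after substitution, $r_i^\ast H$ (with the correction included for $r_5$) has no poles along the exceptional locus of $r_i$.

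The key simplification is that each $r_i$ inverts only a small block of canonical variables, so I would impose the pole-cancellation conditions block by block, expanding $H$ in the inverted variable and killing its polar part order by order. Since $r_0,r_1$ invert $y$ (with $y_0=y_1=1/y$) and shear $x$, requiring $r_0^\ast H$ and $r_1^\ast H$ to be free of poles in $y_0$ forces the $(x,y)$-dependence of $H$ to take exactly the shape of $H_1(x,y,t;\alpha_0,\alpha_1)$ up to terms coupling to the remaining variables. Similarly $r_4,r_5$ invert $q$, and here the $t$-dependent correction attached to $r_5$ is exactly what forces the explicitly $t$-dependent monomials $-tq^2p$ and $-\alpha_4 tq$ to appear, fixing the $(q,p)$-dependence as $H_4(q,p,t;\alpha_4,\alpha_5)$. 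The coupling terms arise from the off-diagonal patches: $r_2$ (applied after $r_0$) inverts $x_0$ while mixing it with $z_0,w_0$ through $z_2=z_0-x_0$ and $y_2=-((y_0+w_0)x_0+\alpha_2)x_0$, and cancellation of poles in $x_0$ forces both the $H_{III}^{D_7^{(1)}}(z,w,t;\alpha_0+\alpha_1+2\alpha_2)$ block and the cross term $2xz/t$; likewise $r_3$ (applied after $r_4$) inverts $w_4$ while mixing with $z_4,q_4$ through $z_3=-((z_4-q_4)w_4-\alpha_3)w_4$ and $p_3=p_4+w_4$, and its pole cancellation produces the remaining cross term $-2wp/t$.

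Collecting all these linear relations among the $a_I(t)$ and solving, I expect the coefficients to be uniquely determined, the only residual freedom being an additive function of $t$ alone, which does not affect the Hamiltonian vector field. Hence $H$ must equal \eqref{2}, with the relation \eqref{3} emerging as the compatibility of the resolution parameters $\alpha_i$ across the patches, and the associated system is exactly \eqref{4}; the converse direction (that \eqref{4} does satisfy (A1)--(A3)) is automatic from the construction underlying Theorem \ref{2.1}. The main obstacle is organizational rather than conceptual: there are $\binom{10}{4}=210$ candidate monomials, so the hard part will be keeping the linear system tractable. I would manage this by using the block decoupling above to eliminate most coefficients before combining the cross-term conditions, by correctly incorporating the $t$-dependent correction for $r_5$ (the one genuinely subtle input), and by respecting that $r_2,r_3$ act on the \emph{already transformed} Hamiltonians. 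The delicate point to verify is that the cross-coupling conditions coming separately from $r_2$ and $r_3$ are simultaneously consistent with the single coupling $2(xz-wp)/t$, leaving no spurious extra solution.
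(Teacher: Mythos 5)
Your proposal is essentially the paper's own (implicit) proof: the paper gives no written argument for Theorem \ref{2.2} beyond the ``checked by a direct calculation'' remark attached to the analogous Theorems \ref{3.2}, \ref{3.5} and \ref{4.2}, and that calculation is exactly what you describe --- expand a generic degree-$4$ Hamiltonian with coefficients in ${\Bbb C}(t)$, impose pole-cancellation of the transformed Hamiltonian in each canonical chart $r_i$, and solve the resulting linear system, the solution being unique up to an additive function of $t$ which does not affect the vector field. You also correctly identify the only two delicate points, namely that $r_5$ is the sole $t$-dependent chart and hence the only one whose transformed Hamiltonian acquires a correction term, and that the charts $r_2,r_3$ must be applied to the Hamiltonians already rewritten in the $r_0$- and $r_4$-coordinates, which matches the Noumi--Takano--Yamada holomorphy framework the paper relies on.
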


\section{The system of type $B_5^{(1)}$ }
In this section, we propose two types of a 5-parameter family of coupled Painlev\'e III systems in dimension six with extended affine Weyl group symmetry of type $B_5^{(1)}$. Each of them is equivalent to a polynomial Hamiltonian system, however, each has a different representaion of type $B_5^{(1)}$. We also show that each of them is equivalent to the system \eqref{4} by a birational and symplectic transformation, respectively.

The first member is given by
\begin{equation}\label{5}
  \left\{
  \begin{aligned}
   \frac{dx}{dt} &=\frac{2x^2y+2\alpha_0x}{t}-\frac{4xyz-2\alpha_1z}{t},\\
   \frac{dy}{dt} &=-\frac{2xy^2+2\alpha_0y-1}{t}+\frac{2y^2z}{t},\\
   \frac{dz}{dt} &=\frac{2z^2w+2(\alpha_0+\alpha_1+\alpha_2)z+t}{t}-\frac{2p}{t},\\
   \frac{dw}{dt} &=-\frac{2zw^2+2(\alpha_0+\alpha_1+\alpha_2)w+1}{t}+\frac{2(xy-\alpha_1)y}{t},\\
   \frac{dq}{dt} &=\frac{2q^2p-tq^2-2(\alpha_0+\alpha_1+\alpha_2+\alpha_3)q}{t}-\frac{2w}{t},\\
   \frac{dp}{dt} &=\frac{-2qp^2+2tqp+2(\alpha_0+\alpha_1+\alpha_2+\alpha_3)p+\alpha_4t}{t}
   \end{aligned}
  \right. 
\end{equation}
with the Hamiltonian
\begin{align}\label{6}
\begin{split}
H &=H_3(x,y,t;\alpha_0)+H_{III}^{D_7^{(1)}}(z,w,t,2(\alpha_0+\alpha_1+\alpha_2))\\
&+H_4(q,p,t;\alpha_4,\alpha_5)-\frac{2(xy-\alpha_1)yz+2wp}{t}\\
&=\frac{x^2y^2+2\alpha_0xy-x}{t}+\frac{z^2w^2+2(\alpha_0+\alpha_1+\alpha_2)zw+z+tw}{t}\\
&+\frac{q^2p^2-tq^2p-2(\alpha_0+\alpha_1+\alpha_2+\alpha_3)qp-\alpha_4tq}{t}-\frac{2(xy-\alpha_1)yz+2wp}{t}.
\end{split}
\end{align}
Here $x,y,z,w,q$ and $p$ denote unknown complex variables and $\alpha_0,\alpha_1, \dots ,\alpha_5$ are complex parameters satisfying the relation:
\begin{equation}\label{7}
2\alpha_0+2\alpha_1+2\alpha_2+2\alpha_3+\alpha_4+\alpha_5=1.
\end{equation}

\begin{theorem}\label{3.1}
The system \eqref{5} admits extended affine Weyl group symmetry of type $B_5^{(1)}$ as the group of its B{\"a}cklund transformations {\rm (cf. \cite{NY2,S,T}), \rm} whose generators are explicitly given as follows{\rm : \rm}
\begin{align*}
        s_{0}: (*) &\rightarrow (-x-\frac{2\alpha_0}{y}+\frac{1}{y^2},-y,-z,-w,-q,-p,-t;-\alpha_0,\alpha_1+2\alpha_0,\alpha_2,\alpha_3,\alpha_4,\alpha_5), \\
        s_{1}: (*) &\rightarrow (x,y-\frac{\alpha_1}{x},z,w,q,p,t;\alpha_0+\alpha_1,-\alpha_1,\alpha_2+\alpha_1,\alpha_3,\alpha_4,\alpha_5), \\
        s_{2}: (*) &\rightarrow  (x+\frac{\alpha_2}{y+w},y,z+\frac{\alpha_2}{y+w},w,q,p,t;\alpha_0,\alpha_1+\alpha_2,-\alpha_2,\alpha_3+\alpha_2,\alpha_4,\alpha_5),\\
        s_{3}: (*) &\rightarrow (x,y,z,w-\frac{\alpha_3q}{zq-1},q,p-\frac{\alpha_3z}{zq-1},t;\alpha_0,\alpha_1,\alpha_2+\alpha_3,-\alpha_3,\alpha_4+\alpha_3,\alpha_5+\alpha_3), \\
        s_{4}: (*) &\rightarrow (x,y,z,w,q+\frac{\alpha_4}{p},p,t;\alpha_0,\alpha_1,\alpha_2,\alpha_3+\alpha_4,-\alpha_4,\alpha_5), \\
        s_{5}: (*) &\rightarrow (x,y,z,w,q+\frac{\alpha_5}{p-t},p,t;\alpha_0,\alpha_1,\alpha_2,\alpha_3+\alpha_5,\alpha_4,-\alpha_5), \\
        \varphi: (*) &\rightarrow (x,y,z,w,q,p-t,-t;\alpha_0,\alpha_1,\alpha_2,\alpha_3,\alpha_5,\alpha_4).
\end{align*}
\end{theorem}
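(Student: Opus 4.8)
The plan is to establish the two independent assertions in the theorem separately: that each of $s_0,\dots,s_5,\varphi$ is a B\"acklund transformation of \eqref{5}, and that the seven maps satisfy the defining relations of $\widetilde W(B_5^{(1)})$. For the first assertion I would work with the canonical structure of \eqref{5}: the conjugate pairs are $(x,y),(z,w),(q,p)$, the symplectic form is $\omega=dy\wedge dx+dw\wedge dz+dp\wedge dq$, and the Poincar\'e--Cartan form is $\eta=y\,dx+w\,dz+p\,dq-H\,dt$ with $H$ as in \eqref{6}. A birational map $g$, carrying the indicated substitution of $\alpha$ and $t$, is a symmetry of the system exactly when $g^{*}\eta=\eta+dF$ for some rational function $F$. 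This one criterion covers all seven generators: for the $t$-preserving reflections $s_1,s_2,s_3,s_4$, whose spatial parts involve no explicit $t$, it reduces to $g^{*}\omega=\omega$ together with the literal invariance $H(g(*);g(\alpha))=H(*;\alpha)$; for $s_5$, whose shift $q\mapsto q+\alpha_5/(p-t)$ is $t$-dependent, it produces the extra generating-function term $\partial_{t}F=-\alpha_5/(p-t)$; and for the two time-reversing generators $s_0$ and $\varphi$ ($t\mapsto-t$) the sign of the $dt$ in $\eta$ flips, which is absorbed by the global sign changes in $s_0$ and by the shift $p\mapsto p-t$ in $\varphi$. The symplectic condition $g^{*}\omega=\omega$ is immediate for the shears $s_1,s_4,s_5$, whose correction terms depend only on the conjugate variable (and on $t$); for $s_0,s_2,s_3$ and $\varphi$ I would verify it by a direct computation of $g^{*}\omega$.

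The computational heart of the first assertion is the invariance of $H$ under the three coupling generators $s_0,s_2,s_3$. The Hamiltonian \eqref{6} is not a sum of three decoupled blocks: it carries the cross terms $-2(xy-\alpha_1)yz/t$, which ties the $(x,y)$ block to the $(z,w)$ block, and $-2wp/t$, which ties $(z,w)$ to $(q,p)$. The reflection $s_2$ acts through the shared denominator $y+w$ and pushes $\alpha_2$ into both neighbouring parameters, so that the cancellation of its contribution is precisely what $-2(xy-\alpha_1)yz/t$ supplies; likewise $s_3$ acts through $zq-1$ and redistributes $\alpha_3$ into $\alpha_2,\alpha_4,\alpha_5$, its cancellation being forced by $-2wp/t$. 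I would substitute these rational expressions into \eqref{6}, clear the denominators $y$, $y+w$ and $zq-1$, and confirm the required invariance. The single most delicate substitution is $s_0$, on account of its $1/y^{2}$ tail, but $s_0$ alters only the first block and the pair $(\alpha_0,\alpha_1)$ and is therefore self-contained. A convenient running check is that every generator preserves the affine relation \eqref{7}: the null root $\delta=2\alpha_0+2\alpha_1+2\alpha_2+2\alpha_3+\alpha_4+\alpha_5$ is fixed by each reflection and merely permuted by $\varphi$, as one verifies directly from the listed parameter actions.

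With every generator known to be a B\"acklund transformation, the second assertion is the finite verification of the Coxeter relations encoded by the Dynkin diagram of type $B_5^{(1)}$, whose edges are the double bond $0{=}1$, the chain $1{-}2{-}3$, and the fork at node $3$ onto the two leaves $4$ and $5$: namely $s_i^{2}=1$ for all $i$; $(s_is_j)^{2}=1$ for every non-adjacent pair; $(s_is_j)^{3}=1$ along the single bonds $1{-}2$, $2{-}3$, $3{-}4$ and $3{-}5$; and $(s_0s_1)^{4}=1$ for the double bond; together with $\varphi^{2}=1$, $\varphi s_4\varphi=s_5$, $\varphi s_5\varphi=s_4$ and $\varphi s_i\varphi=s_i$ $(i=0,1,2,3)$, which express that $\varphi$ realizes the order-two automorphism of the diagram. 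On the parameters $\alpha_0,\dots,\alpha_5$ these reduce to the relations of the reflection representation of $B_5^{(1)}$ and are immediate from the Cartan integers read off the stated action. On the variables each relation becomes an identity between composites of the explicit birational maps, to be checked by direct substitution in $x,y,z,w,q,p$ and $t$; I expect the only genuinely laborious cases to be the braid relations at the fork node $3$ and the length-four relation $(s_0s_1)^{4}=1$ at the double bond.
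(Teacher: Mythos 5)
Your outline---verify generator by generator that \eqref{5} is preserved, then check the Coxeter and diagram-automorphism relations---is exactly the ``direct calculation'' that the paper invokes for Theorem \ref{3.1}, and your reading of the diagram (double bond $0{=}1$, chain $1{-}2{-}3$, fork at node $3$ with leaves $4,5$, with $\varphi$ swapping $4$ and $5$) and of the corresponding relation list is correct. The gap is in the criterion you propose to run the verification with. You reduce the symmetry condition for the $t$-preserving generators to symplecticity plus the \emph{literal} invariance $H(g(*);g(\alpha))=H(*;\alpha)$, and in your master criterion you require $g^{*}\eta-\eta=dF$ with $F$ \emph{rational}. Neither holds for the generators of Theorem \ref{3.1}. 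Writing $H\circ s_i$ for the result of substituting the images of the variables and parameters under $s_i$ into \eqref{6}, a direct computation gives
\begin{gather*}
H\circ s_1=H-\frac{\alpha_1(\alpha_1+2\alpha_0)}{t},\qquad
H\circ s_2=H+\frac{\alpha_2(\alpha_2+2\alpha_0+2\alpha_1)}{t},\\
H\circ s_4=H-\frac{\alpha_4(\alpha_4+2\alpha_0+2\alpha_1+2\alpha_2+2\alpha_3)}{t},
\end{gather*}
so $H$ is reproduced only modulo additive terms of the form $c(\alpha)/t$: the ``confirmation of invariance'' that you call the computational heart would instead terminate with these nonzero leftovers, and your plan as stated has no way to dispose of them (indeed, taken literally it would lead you to conclude that $s_1$, $s_2$, $s_4$ are \emph{not} symmetries). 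The rational-exactness requirement fails for the same reason: already the pullback of $y\,dx$ under $s_1$ differs from $y\,dx$ by $-\alpha_1\,d(\log x)$, and the leftover Hamiltonian terms contribute multiples of $dt/t$, so any primitive $F$ is logarithmic, not rational.

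The missing idea is the standard one for non-autonomous Hamiltonian systems: a B\"acklund transformation need only preserve the equations of motion, i.e.\ the two-form $d\eta=\omega-dH\wedge dt$; equivalently $g^{*}\eta-\eta$ must be \emph{closed}, and $H$ need only be recovered up to an additive function of $t$ alone, whose Hamiltonian vector field vanishes. With the criterion relaxed in this way the leftovers above are harmless, your substitutions close up, and the second half of your argument (the relations $s_i^2=1$, $(s_0s_1)^4=1$, the braid relations along $1{-}2$, $2{-}3$, $3{-}4$, $3{-}5$, commutation of non-adjacent pairs, and $\varphi s_4\varphi=s_5$) is correctly identified and completes the proof. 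As a remark, the paper also contains an essentially computation-free alternative: by the proof of Theorem \ref{3.3}, the seven generators of Theorem \ref{3.1} are precisely the transformations $\pi_1,s_0,s_2,s_3,s_4,s_5,\pi_2$ of Theorem \ref{2.1} rewritten through the birational symplectic change of variables \eqref{9}, so their B\"acklund property for \eqref{5} can be inherited from the already established $D_5^{(1)}$ symmetry of \eqref{4} instead of being recomputed from scratch.
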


\begin{figure}[t]
\unitlength 0.1in
\begin{picture}(44.42,18.96)(16.38,-23.66)
%
\special{pn 20}%
\special{ar 2602 1562 232 232  0.0000000 6.2831853}%
%
\special{pn 20}%
\special{ar 1870 1560 232 232  0.0000000 6.2831853}%
%
\special{pn 20}%
\special{ar 4442 1558 232 232  0.0000000 6.2831853}%
%
\special{pn 20}%
\special{ar 5332 912 232 232  0.0000000 6.2831853}%
%
\special{pn 20}%
\special{ar 5343 2134 232 232  0.0000000 6.2831853}%
%
\special{pn 20}%
\special{pa 4609 1391}%
\special{pa 5194 1108}%
\special{fp}%
%
\special{pn 20}%
\special{pa 4588 1735}%
\special{pa 5121 1986}%
\special{fp}%
\put(24.5600,-16.5600){\makebox(0,0)[lb]{$x$}}%
\put(42.0000,-16.5000){\makebox(0,0)[lb]{$zq-1$}}%
\put(52.1000,-10.1000){\makebox(0,0)[lb]{$p$}}%
\put(51.4000,-22.3000){\makebox(0,0)[lb]{$p-t$}}%
\put(17.5000,-12.9000){\makebox(0,0)[lb]{$0$}}%
\put(24.8000,-12.8000){\makebox(0,0)[lb]{$1$}}%
\put(43.2200,-12.9000){\makebox(0,0)[lb]{$3$}}%
\put(52.2200,-18.9000){\makebox(0,0)[lb]{$5$}}%
\put(52.2000,-6.4000){\makebox(0,0)[lb]{$4$}}%
%
\special{pn 20}%
\special{pa 2840 1570}%
\special{pa 3270 1570}%
\special{fp}%
%
\special{pn 20}%
\special{pa 2370 1510}%
\special{pa 2100 1510}%
\special{fp}%
\special{sh 1}%
\special{pa 2100 1510}%
\special{pa 2167 1530}%
\special{pa 2153 1510}%
\special{pa 2167 1490}%
\special{pa 2100 1510}%
\special{fp}%
%
\special{pn 20}%
\special{pa 2370 1630}%
\special{pa 2100 1630}%
\special{fp}%
\special{sh 1}%
\special{pa 2100 1630}%
\special{pa 2167 1650}%
\special{pa 2153 1630}%
\special{pa 2167 1610}%
\special{pa 2100 1630}%
\special{fp}%
%
\special{pn 8}%
\special{pa 4682 1580}%
\special{pa 5912 1580}%
\special{dt 0.045}%
\special{pa 5912 1580}%
\special{pa 5911 1580}%
\special{dt 0.045}%
%
\special{pn 20}%
\special{pa 5722 1370}%
\special{pa 5742 1336}%
\special{pa 5763 1304}%
\special{pa 5784 1274}%
\special{pa 5805 1250}%
\special{pa 5828 1232}%
\special{pa 5852 1222}%
\special{pa 5877 1221}%
\special{pa 5903 1229}%
\special{pa 5930 1245}%
\special{pa 5956 1268}%
\special{pa 5981 1294}%
\special{pa 6004 1324}%
\special{pa 6025 1355}%
\special{pa 6042 1385}%
\special{pa 6056 1416}%
\special{pa 6067 1447}%
\special{pa 6075 1478}%
\special{pa 6079 1510}%
\special{pa 6080 1542}%
\special{pa 6077 1574}%
\special{pa 6071 1607}%
\special{pa 6062 1640}%
\special{pa 6049 1674}%
\special{pa 6033 1707}%
\special{pa 6015 1738}%
\special{pa 5994 1765}%
\special{pa 5971 1787}%
\special{pa 5945 1803}%
\special{pa 5919 1810}%
\special{pa 5891 1809}%
\special{pa 5862 1801}%
\special{pa 5833 1786}%
\special{pa 5803 1768}%
\special{pa 5772 1747}%
\special{pa 5762 1740}%
\special{sp}%
%
\special{pn 20}%
\special{pa 5782 1750}%
\special{pa 5672 1650}%
\special{fp}%
\special{sh 1}%
\special{pa 5672 1650}%
\special{pa 5708 1710}%
\special{pa 5711 1686}%
\special{pa 5735 1680}%
\special{pa 5672 1650}%
\special{fp}%
\put(57.5200,-12.0000){\makebox(0,0)[lb]{$\varphi$}}%
%
\special{pn 20}%
\special{ar 3522 1572 232 232  0.0000000 6.2831853}%
\put(33.2000,-16.7000){\makebox(0,0)[lb]{$y+w$}}%
\put(34.0000,-12.9000){\makebox(0,0)[lb]{$2$}}%
%
\special{pn 20}%
\special{pa 3760 1580}%
\special{pa 4190 1580}%
\special{fp}%
\end{picture}%
\label{fig:D53}
\caption{Dynkin diagram of type $B_5^{(1)}$}
\end{figure}

\begin{theorem}\label{3.2}
Let us consider a polynomial Hamiltonian system with Hamiltonian $H \in {\Bbb C}(t)[x,y,z,w,q,p]$. We assume that

$(A1)$ $deg(H)=4$ with respect to $x,y,z,w,q,p$.

$(A2)$ This system becomes again a polynomial Hamiltonian system in each coordinate $r_i \ (i=0,1,2,4,5)${\rm : \rm}
\begin{align*}
r_0&:x_0=x+\frac{2\alpha_0}{y}-\frac{1}{y^2}, \ y_0=y, \ z_0=z, \ w_0=w, \ q_0=q, \ p_0=p, \\
r_1&:x_1=-(xy-\alpha_1)y, \ y_1=1/y, \ z_1=z, \ w_1=w, \ q_1=q, \ p_1=p, \\
r_2&:x_2=1/x, \ y_2=-((y+w)x+\alpha_2)x, \ z_2=z-x, \ w_2=w, \ q_2=q, \ p_2=p, \\
r_4&:x_4=x, \ y_4=y, \ z_4=z, \ w_4=w, \ q_4=1/q, \ p_4=-(pq+\alpha_4)q, \\
r_5&:x_5=x, \ y_5=y, \ z_5=z, \ w_5=w. \ q_5=1/q, \ p_5=-((p-t)q+\alpha_5)q.
\end{align*}

$(A3)$ In addition to the assumption $(A2)$, the Hamiltonian system in the coordinate system $(x_4,y_4,z_4,w_4,q_4,p_4)$ becomes again a polynomial Hamiltonian system in the coordinate $r_3$\rm{:\rm}
\begin{align*}
r_3:x_3=x_4, \ y_3=y_4, \ z_3=-((z_4-q_4)w_4-\alpha_3)w_4, \ w_3=1/w_4, \ q_3=q_4, \ p_3=p_4+w_4.
\end{align*}
Then such a system coincides with the system \eqref{5}.
\end{theorem}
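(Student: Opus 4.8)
The plan is to run the standard ``characterization by holomorphy'' argument: begin with the most general Hamiltonian permitted by $(A1)$ and let the transformations $r_i$ successively cut down the undetermined coefficients until only the system \eqref{5} survives. Concretely, by $(A1)$ I would write
\[
H=\sum_{i+j+k+l+m+n\le 4} c_{ijklmn}(t)\,x^i y^j z^k w^l q^m p^n,
\]
with unknown coefficients $c_{ijklmn}\in\mathbb{C}(t)$, and use that each $r_i$ is birational and symplectic. Hence in every coordinate $r_i$ the system is automatically again Hamiltonian: its Hamiltonian is $H_i=H\circ r_i^{-1}$ for the $t$-independent maps $r_0,r_1,r_2,r_3,r_4$, whereas for the $t$-dependent map $r_5$ (the only one containing $t$, through $p-t$) one must add the canonical correction arising from differentiating in $t$ the generating function of $r_5$. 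The content of $(A2)$ and $(A3)$ is then precisely that each such $H_i$ is \emph{polynomial}, i.e. has no poles along the divisor contracted by $r_i$.

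Next I would extract the defining equations. Inverting each $r_i$ and substituting into $H$, the poles of $H_i$ sit along a single coordinate hypersurface: $y=0$ for $r_0$ and $r_1$, $x=0$ (equivalently $x_0=0$) for $r_2$, $q=0$ for $r_4$ and $r_5$, and $w_4=0$ for $r_3$. Expanding $H_i$ as a Laurent polynomial in the relevant local variable and demanding that every strictly negative power vanish yields a finite list of linear (with a few bilinear) relations among the $c_{ijklmn}(t)$. Because a single $r_i$ moves essentially one canonical pair together with a low-degree shift of a neighbouring coordinate (e.g.\ $z_2=z-x$, $p_3=p_4+w_4$, $z_4-q_4=z-1/q$), these relations organize themselves almost blockwise: the pair $(x,y)$ is pinned by $r_0,r_1,r_2$, the pair $(z,w)$ by $r_2,r_3$, and the pair $(q,p)$ by $r_3,r_4,r_5$.

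I would then solve the system. The $r_4,r_5$ conditions force the third block to be $H_4(q,p,t;\alpha_4,\alpha_5)$, fixing the coefficients of $q^2p^2$, $qp$ and $q$ — here the $t$-correction from $r_5$ is exactly what produces the explicit $-tq^2p$ and $-\alpha_4 tq$ terms. The $r_0,r_1$ conditions force an $H_3$-type block $x^2y^2+2\alpha_0xy-x$, while $r_2,r_3$ pin the middle block to $H_{III}^{D_7^{(1)}}(z,w,t;2(\alpha_0+\alpha_1+\alpha_2))$. Finally the residual pole cancellations, in which the inter-block monomials $xy^2z$, $yz$ and $wp$ appear simultaneously, determine the coupling term $-\bigl(2(xy-\alpha_1)yz+2wp\bigr)/t$ uniquely. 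Imposing the normalization \eqref{7} removes the last ambiguity; since any leftover additive function of $t$ alone leaves the canonical equations unchanged, the resulting $H$ must equal \eqref{6}, so the system coincides with \eqref{5}.

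The main obstacle I anticipate is the bookkeeping of the coupling monomials. Each block condition is routine, but the cross terms $xy^2z$, $yz$ and $wp$ enter several of the pole expansions at once, so one must verify that the (overdetermined) union of all cancellation conditions coming from $r_0,\dots,r_5$ is consistent and leaves exactly the claimed five-parameter family rather than over- or under-constraining $H$. A secondary delicate point is getting the canonical correction for the $t$-dependent map $r_5$ exactly right, since it is responsible for the $t$-dependent terms of the $(q,p)$-block; an error there would spoil the identification with \eqref{6}.
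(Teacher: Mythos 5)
Your proposal is correct and is essentially the paper's own argument: the paper disposes of Theorem \ref{3.2} with the single remark that it ``can be checked by a direct calculation,'' and the calculation intended is exactly the one you outline — a general degree-4 ansatz $H=\sum c_{ijklmn}(t)x^iy^jz^kw^lq^mp^n$, pole-cancellation conditions in each chart $r_i$ (the only genuinely subtle point being the canonical correction term for the $t$-dependent chart $r_5$, which you correctly flag), and solving the resulting relations to recover \eqref{6} up to an additive function of $t$. The only slip, harmless for the argument, is your labelling of the pole divisors: they are the zero loci of the \emph{new} inverted coordinates ($y_1=0$, $x_2=0$, $q_4=0$, $q_5=0$, $w_3=0$, i.e.\ the old coordinate at infinity), not of the old coordinates themselves.
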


Theorems \ref{3.1} and \ref{3.2} can be checked by a direct calculation, respectively.

\begin{theorem}\label{3.3}
For the system \eqref{4} of type $D_5^{(1)}$, we make the change of parameters and variables
\begin{gather}
\begin{gathered}\label{8}
A_0=\frac{\alpha_1-\alpha_0}{2}, \quad A_1=\alpha_0, \quad A_2=\alpha_2, \quad A_3=\alpha_3, \quad A_4=\alpha_4, \quad A_5=\alpha_5,\\
\end{gathered}\\
\begin{gathered}\label{9}
X=-(xy-\alpha_0)y, \quad Y=\frac{1}{y}, \quad Z=z, \quad W=w, \quad Q=q, \quad P=p
\end{gathered}
\end{gather}
from $\alpha_0,\alpha_1, \dots ,\alpha_5,x,y,z,w,q,p$ to $A_0,A_1,\dots ,A_5,X,Y,Z,W,Q,P$. Then the system \eqref{4} can also be written in the new variables $X,Y,Z,W,Q,P$ and parameters $A_0,A_1,\dots ,A_5$ as a Hamiltonian system. This new system tends to the system \eqref{5} with the Hamiltonian \eqref{6}.
\end{theorem}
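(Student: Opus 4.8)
The plan is to exploit the fact that the change of variables \eqref{9} is a \emph{time-independent canonical} transformation, so that it automatically carries the Hamiltonian system \eqref{4} into another Hamiltonian system whose Hamiltonian is obtained by pure substitution. First I would verify that \eqref{9} preserves the symplectic two-form $dx\wedge dy + dz\wedge dw + dq\wedge dp$. Since $Z=z$, $W=w$, $Q=q$, $P=p$ act as the identity on the last two conjugate pairs, it suffices to check $dX\wedge dY = dx\wedge dy$ for $X=-(xy-\alpha_0)y$, $Y=1/y$; using $dY=-y^{-2}\,dy$ and $dX=-y^{2}\,dx+(\alpha_0-2xy)\,dy$ one finds $dX\wedge dY=dx\wedge dy$. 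Because the transformation carries no explicit $t$, the transformed Hamiltonian $\widetilde H$ is simply $H$ from \eqref{2} re-expressed through the inverse map, and the new system is guaranteed to be Hamiltonian in $X,Y,Z,W,Q,P$.

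Next I would record the inverse of \eqref{9}. From $Y=1/y$ and $X=-(xy-\alpha_0)y$ one obtains $xy=\alpha_0-XY$, hence $y=1/Y$ and $x=\alpha_0 Y-XY^{2}=-Y(XY-\alpha_0)$. Only two pieces of \eqref{2} change under this substitution: the block $H_1(x,y,t;\alpha_0,\alpha_1)$ and the coupling term $2xz/t$; the blocks $H_{III}^{D_7^{(1)}}(z,w,\cdot)$, $H_4(q,p,\cdot)$ and $-2wp/t$ are untouched because $z,w,q,p$ are fixed. Substituting $xy=\alpha_0-XY$, $y=1/Y$ into $H_1$ collapses its numerator to $X^{2}Y^{2}+(\alpha_1-\alpha_0)XY-X-\alpha_0\alpha_1$, while $2xz/t=-2(XY-\alpha_0)YZ/t$.

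Then I would invoke the parameter dictionary \eqref{8}. Writing $\alpha_1-\alpha_0=2A_0$ and $\alpha_0=A_1$ gives $H_1=H_3(X,Y,t;A_0)-\alpha_0\alpha_1/t$, and the coupling term becomes $-2(XY-A_1)YZ/t$, which is exactly the coupling appearing in \eqref{6}. A short check using \eqref{8} confirms the internal parameter of the third block matches, $\alpha_0+\alpha_1+2\alpha_2=2(A_0+A_1+A_2)$, and that $H_4(q,p,t;\alpha_4,\alpha_5)=H_4(Q,P,t;A_4,A_5)$; one likewise verifies that the relation \eqref{3} is transported to the relation \eqref{7} under \eqref{8}. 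Collecting terms yields $\widetilde H=(\text{the Hamiltonian }\eqref{6})-\alpha_0\alpha_1/t$. Since the leftover $-\alpha_0\alpha_1/t$ is a function of $t$ alone, it contributes nothing to the canonical equations $\dot X=\partial\widetilde H/\partial Y$, etc., so the transformed system is precisely \eqref{5} with Hamiltonian \eqref{6}.

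The step requiring the most care, rather than genuine difficulty, is the bookkeeping of the dictionary \eqref{8}: one must consistently translate the $B_5^{(1)}$ labels $\alpha_i$ of \eqref{5}--\eqref{6} into the $A_i$ produced from the $D_5^{(1)}$ labels, and confirm that each of the three Hamiltonian blocks, the coupling term, and the linear relation among the parameters all match after this translation. I expect no structural obstacle beyond this symbol-tracking, because the symplectic nature of \eqref{9} already guarantees that the outcome is Hamiltonian, so the entire content of the theorem reduces to checking the explicit identity of the two Hamiltonians up to an additive function of $t$.
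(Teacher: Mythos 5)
Your proof is correct, and it takes a genuinely different route from the paper's. Both arguments open with the same two observations: the dictionary \eqref{8} carries the normalization \eqref{3} to \eqref{7}, and the map \eqref{9} is symplectic (your explicit check $dX\wedge dY=dx\wedge dy$ is right). After that the paths diverge. You prove the theorem by direct pull-back of the Hamiltonian: you invert \eqref{9}, substitute into \eqref{2}, and verify block by block that the result is the Hamiltonian \eqref{6} up to the additive term $-\alpha_0\alpha_1/t$, which, depending only on $t$, does not affect the canonical equations; your intermediate computations (the collapse of the $H_1$-numerator to $X^2Y^2+(\alpha_1-\alpha_0)XY-X-\alpha_0\alpha_1$, the coupling $2xz/t\mapsto -2(XY-A_1)YZ/t$, and the identity $\alpha_0+\alpha_1+2\alpha_2=2(A_0+A_1+A_2)$) are all correct. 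The paper instead argues through the symmetry structure: it selects $S_0:=\pi_1$, $S_1:=s_0$, $S_2:=s_2$, $S_3:=s_3$, $S_4:=s_4$, $S_5:=s_5$, $\varphi:=\pi_2$ from the $D_5^{(1)}$ B\"acklund group of Theorem \ref{2.1}, and checks that in the new parameters $A_i$ these act as the reflections generating $\tilde{W}(B_5^{(1)})$ of Theorem \ref{3.1}. What the paper's route buys is the structural content -- the identification of the two affine Weyl group representations, which is the point of the surrounding theorems; what it leaves implicit is precisely the computation you carried out, namely that the transformed vector field literally coincides with \eqref{5}. Your route is more self-contained on the stated claim and makes explicit a subtlety the paper never mentions: the two Hamiltonians agree only modulo a function of $t$, not identically, so the equality is of systems rather than of Hamiltonians.
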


\begin{proof}
Notice that
$$
2A_0+2A_1+2A_2+2A_3+A_4+A_5=\alpha_0+\alpha_1+2\alpha_2+2\alpha_3+\alpha_4+\alpha_5=1
$$
and the change of variables from $(x,y,z,w,q,p)$ to $(X,Y,Z,W,Q,P)$ is symplectic. Choose $S_i$ $(i=0,1,\dots ,5)$ and $\varphi$ as
$$
S_0:=\pi_1, \ S_1:=s_0, \ S_2:=s_2, \ S_3:=s_3, \ S_4:=s_4, \ S_5:=s_5, \ \varphi:=\pi_2.
$$
Then the transformations $S_i$ are reflections of the parameters $A_0,A_1,\dots ,A_5$. The transformation group $\tilde{W}(B_5^{(1)})=<S_0,S_1,\dots ,S_5,\varphi>$ coincides with the transformations given in Theorem \ref{3.1}.
\end{proof}

The second member is given by
\begin{equation}\label{10}
  \left\{
  \begin{aligned}
   \frac{dx}{dt} &=\frac{2x^2y+2xy-(\alpha_0+\alpha_1)x-\alpha_0}{t},\\
   \frac{dy}{dt} &=-\frac{2xy^2+y^2-(\alpha_0+\alpha_1)y}{t}-\frac{2z}{t},\\
   \frac{dz}{dt} &=\frac{2z^2w+(\alpha_0+\alpha_1+2\alpha_2)z+t}{t}+\frac{2q(qp+\alpha_4)}{t},\\
   \frac{dw}{dt} &=-\frac{2zw^2+(\alpha_0+\alpha_1+2\alpha_2)w+1}{t}-\frac{2x}{t},\\
   \frac{dq}{dt} &=\frac{2q^2p-(2\alpha_5-1)q+t}{t}+\frac{2wq^2}{t},\\
   \frac{dp}{dt} &=\frac{-2qp^2+(2\alpha_5-1)p}{t}-\frac{4wqp+2\alpha_4w}{t}
   \end{aligned}
  \right. 
\end{equation}
with the Hamiltonian
\begin{align}\label{11}
\begin{split}
H &=H_1(x,y,t;\alpha_0,\alpha_1)+H_{III}^{D_7^{(1)}}(z,w,t,\alpha_0+\alpha_1+2\alpha_2)\\
&+H_2(q,p,t;\alpha_5)+\frac{2xz+2wq(qp+\alpha_4)}{t}\\
&=\frac{x^2y^2+xy^2-(\alpha_0+\alpha_1)xy-\alpha_0y}{t}+\frac{z^2w^2+(\alpha_0+\alpha_1+2\alpha_2)zw+z+tw}{t}\\
&+\frac{q^2p^2-(2\alpha_5-1)qp+tp}{t}+\frac{2xz+2wq(qp+\alpha_4)}{t}.
\end{split}
\end{align}
Here $x,y,z,w,q$ and $p$ denote unknown complex variables and $\alpha_0,\alpha_1, \dots ,\alpha_5$ are complex parameters satisfying the relation:
\begin{equation}\label{12}
\alpha_0+\alpha_1+2\alpha_2+2\alpha_3+2\alpha_4+2\alpha_5=1.
\end{equation}

\begin{figure}
\unitlength 0.1in
\begin{picture}(41.42,18.76)(15.30,-24.16)
%
\special{pn 20}%
\special{ar 2212 962 232 232  0.0000000 6.2831853}%
%
\special{pn 20}%
\special{ar 2223 2184 232 232  0.0000000 6.2831853}%
%
\special{pn 20}%
\special{ar 3101 1589 232 232  0.0000000 6.2831853}%
%
\special{pn 20}%
\special{pa 2400 1108}%
\special{pa 2923 1411}%
\special{fp}%
%
\special{pn 20}%
\special{pa 2380 2007}%
\special{pa 2933 1766}%
\special{fp}%
\put(20.7000,-22.8000){\makebox(0,0)[lb]{$x$}}%
\put(20.0000,-10.5000){\makebox(0,0)[lb]{$x+1$}}%
\put(28.6000,-16.9000){\makebox(0,0)[lb]{$yw+1$}}%
%
\special{pn 20}%
\special{pa 4120 1600}%
\special{pa 4400 1600}%
\special{fp}%
%
\special{pn 20}%
\special{ar 4650 1610 232 232  0.0000000 6.2831853}%
%
\special{pn 20}%
\special{ar 5440 1600 232 232  0.0000000 6.2831853}%
%
\special{pn 20}%
\special{pa 4880 1550}%
\special{pa 5240 1550}%
\special{fp}%
\special{sh 1}%
\special{pa 5240 1550}%
\special{pa 5173 1530}%
\special{pa 5187 1550}%
\special{pa 5173 1570}%
\special{pa 5240 1550}%
\special{fp}%
%
\special{pn 20}%
\special{pa 4870 1710}%
\special{pa 5220 1710}%
\special{fp}%
\special{sh 1}%
\special{pa 5220 1710}%
\special{pa 5153 1690}%
\special{pa 5167 1710}%
\special{pa 5153 1730}%
\special{pa 5220 1710}%
\special{fp}%
\put(45.2000,-17.0000){\makebox(0,0)[lb]{$p$}}%
\put(20.9400,-19.3000){\makebox(0,0)[lb]{$0$}}%
\put(20.9400,-7.1000){\makebox(0,0)[lb]{$1$}}%
\put(29.7400,-13.3000){\makebox(0,0)[lb]{$2$}}%
\put(45.3000,-13.4000){\makebox(0,0)[lb]{$4$}}%
\put(53.2000,-13.4000){\makebox(0,0)[lb]{$5$}}%
%
\special{pn 8}%
\special{pa 1530 1610}%
\special{pa 2860 1610}%
\special{dt 0.045}%
\special{pa 2860 1610}%
\special{pa 2859 1610}%
\special{dt 0.045}%
%
\special{pn 20}%
\special{pa 1840 1470}%
\special{pa 1809 1451}%
\special{pa 1777 1434}%
\special{pa 1747 1421}%
\special{pa 1717 1416}%
\special{pa 1688 1421}%
\special{pa 1661 1435}%
\special{pa 1637 1458}%
\special{pa 1617 1485}%
\special{pa 1602 1513}%
\special{pa 1591 1543}%
\special{pa 1584 1574}%
\special{pa 1581 1605}%
\special{pa 1579 1638}%
\special{pa 1579 1672}%
\special{pa 1580 1706}%
\special{pa 1581 1741}%
\special{pa 1585 1774}%
\special{pa 1594 1805}%
\special{pa 1610 1830}%
\special{pa 1637 1849}%
\special{pa 1669 1859}%
\special{pa 1702 1860}%
\special{pa 1732 1850}%
\special{pa 1759 1834}%
\special{pa 1786 1814}%
\special{pa 1790 1810}%
\special{sp}%
%
\special{pn 20}%
\special{pa 1770 1820}%
\special{pa 1860 1740}%
\special{fp}%
\special{sh 1}%
\special{pa 1860 1740}%
\special{pa 1797 1769}%
\special{pa 1820 1775}%
\special{pa 1823 1799}%
\special{pa 1860 1740}%
\special{fp}%
\put(15.8000,-13.8000){\makebox(0,0)[lb]{$\phi$}}%
%
\special{pn 20}%
\special{pa 3350 1590}%
\special{pa 3630 1590}%
\special{fp}%
%
\special{pn 20}%
\special{ar 3880 1600 232 232  0.0000000 6.2831853}%
\put(36.7000,-16.8000){\makebox(0,0)[lb]{$z-q$}}%
\put(37.6000,-13.3000){\makebox(0,0)[lb]{$3$}}%
\end{picture}%
\label{fig:D55}
\caption{Dynkin diagram of type $B_5^{(1)}$}
\end{figure}

\begin{theorem}\label{3.4}
The system \eqref{10} admits extended affine Weyl group symmetry of type $B_5^{(1)}$ as the group of its B{\"a}cklund transformations {\rm (cf. \cite{NY2,S,T}), \rm} whose generators are explicitly given as follows{\rm : \rm}
\begin{align*}
        s_{0}: (*) &\rightarrow (x,y-\frac{\alpha_0}{x},z,w,q,p,t;-\alpha_0,\alpha_1,\alpha_2+\alpha_0,\alpha_3,\alpha_4,\alpha_5), \\
        s_{1}: (*) &\rightarrow (x,y-\frac{\alpha_1}{x+1},z,w,q,p,t;\alpha_0,-\alpha_1,\alpha_2+\alpha_1,\alpha_3,\alpha_4,\alpha_5), \\
        s_{2}: (*) &\rightarrow  (x+\frac{\alpha_2w}{yw+1},y,z+\frac{\alpha_2y}{yw+1},w,q,p,t;\alpha_0+\alpha_2,\alpha_1+\alpha_2,-\alpha_2,\alpha_3+\alpha_2,\alpha_4,\alpha_5), \\
        s_{3}: (*) &\rightarrow (x,y,z,w-\frac{\alpha_3}{z-q},q,p+\frac{\alpha_3}{z-q},t;\alpha_0,\alpha_1,\alpha_2+\alpha_3,-\alpha_3,\alpha_4+\alpha_3,\alpha_5), \\
        s_{4}: (*) &\rightarrow (x,y,z,w,q+\frac{\alpha_4}{p},p,t;\alpha_0,\alpha_1,\alpha_2,\alpha_3+\alpha_4,-\alpha_4,\alpha_5+\alpha_4), \\
        s_{5}: (*) &\rightarrow (x,y,z,w,q,p-\frac{2\alpha_5}{q}+\frac{t}{q^2},-t;\alpha_0,\alpha_1,\alpha_2,\alpha_3,\alpha_4+2\alpha_5,-\alpha_5), \\
        \phi: (*) &\rightarrow (-x-1,-y,-z,-w,-q,-p,-t;\alpha_1,\alpha_0,\alpha_2,\alpha_3,\alpha_4,\alpha_5).
\end{align*}
\end{theorem}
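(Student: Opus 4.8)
The statement is a symmetry assertion about the explicit system \eqref{10}, and I would prove it by the same direct-verification strategy that underlies Theorems \ref{3.1} and \ref{3.2}, split into two logically independent tasks: first, that each of $s_0,\dots,s_5,\phi$ is individually a B\"acklund transformation of \eqref{10}; and second, that together they satisfy the defining relations of the extended affine Weyl group $\tilde{W}(B_5^{(1)})$. For the first task the key reduction is that every generator in the list is birational and, as one checks, symplectic for the form $dx\wedge dy+dz\wedge dw+dq\wedge dp$ (on the extended phase space including $dH\wedge dt$ for the generators that move $t$). This turns the B\"acklund property into a single identity on the Hamiltonian: writing $g$ for a generator and $g(\alpha)$ for its action on the parameters $(\alpha_0,\dots,\alpha_5)$ subject to \eqref{12}, it suffices to verify
\[
H\bigl(g(x,y,z,w,q,p),t;\,g(\alpha)\bigr)=H(x,y,z,w,q,p,t;\alpha)+f_g(t)
\]
for some function $f_g$ of $t$ alone, since an additive function of $t$ does not alter the canonical equations \eqref{10}. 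For the two time-reversing generators $s_5$ and $\phi$ (both sending $t\mapsto -t$) this identity must be read together with the accompanying sign change of the symplectic data, so that $d/d(-t)$ reproduces the correct flow.

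Rather than expand $H$ in all six variables simultaneously, I would exploit the block structure of \eqref{11}: $H$ is the sum of $H_1(x,y)$, $H_{III}^{D_7^{(1)}}(z,w)$, $H_2(q,p)$, and the coupling term $\{2xz+2wq(qp+\alpha_4)\}/t$. The generators $s_0,s_1$ act only on the $(x,y)$-block, $s_4$ only on the $(q,p)$-block, and $s_5$ on the $(q,p)$-block using precisely a third-Painlev\'e $A_1^{(1)}$ reflection of the type recorded in the introduction (transported through the dictionary $tr_i$ of Proposition \ref{pro:1}, since the $(q,p)$-block here is $H_2$). For all of these I would reuse the known single-block symmetries and merely check that the coupling term is carried along correctly. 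The automorphism $\phi$ is the sign flip $(x,y,z,w,q,p)\mapsto(-x-1,-y,-z,-w,-q,-p)$ exchanging $\alpha_0\leftrightarrow\alpha_1$, and its verification is immediate from the symmetry of $H_1$ under $x\mapsto -x-1$.

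The second task is purely group-theoretic once the generators are known to act on the parameter space as affine reflections. Reading the node adjacencies from the Dynkin diagram of type $B_5^{(1)}$ in Figure~\ref{fig:D55}, I would confirm the Coxeter relations $s_i^2=1$, $(s_is_j)^2=1$ for non-adjacent nodes, $(s_is_j)^3=1$ for nodes joined by a single edge (in particular $(s_2s_3)^3=1$), and $(s_4s_5)^4=1$ for the double bond, together with $\phi^2=1$ and $\phi s_i\phi^{-1}=s_{\phi(i)}$, where $\phi$ interchanges the fork nodes $0$ and $1$ and fixes the rest. These relations can be checked on the linear action on $(\alpha_0,\dots,\alpha_5)$, where they reduce to the standard reflection relations for the $B_5^{(1)}$ Cartan matrix, or directly on the birational phase-space maps.

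I expect the main obstacle to be the coupling generators $s_2$ and $s_3$, the only genuinely six-dimensional transformations in the list: $s_2$ mixes the $(x,y)$- and $(z,w)$-blocks through the denominator $yw+1$, while $s_3$ mixes the $(z,w)$- and $(q,p)$-blocks through $z-q$. For these the block decomposition no longer isolates the computation, and verifying the Hamiltonian identity requires the nontrivial cancellations between the coupling term $\{2xz+2wq(qp+\alpha_4)\}/t$ and the shifted pieces of $H_{III}^{D_7^{(1)}}$, $H_1$, and $H_2$. Checking that $s_2$ and $s_3$ remain symplectic and that $(s_2s_3)^3=1$ holds as an identity of phase-space maps is where the bulk of the direct calculation lies; everything else follows the single-block pattern.
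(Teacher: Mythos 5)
Your proposal is correct and takes essentially the same route as the paper: the paper disposes of Theorem \ref{3.4} with the single remark that it ``can be checked by a direct calculation,'' and your block-by-block verification of the Hamiltonian identity $H(g(\cdot),t;g(\alpha))=H(\cdot,t;\alpha)+f_g(t)$ for each generator, together with the Coxeter relations read off the $B_5^{(1)}$ diagram of Figure 4, is exactly that calculation spelled out (the paper's Theorem \ref{3.6} would also let you import most generators from the $D_5^{(1)}$ system \eqref{4}, with $S_5:=\pi_2$, $\phi:=\pi_1$, but that is not how Theorem \ref{3.4} itself is justified). One caution: the Coxeter and diagram-automorphism relations must in the end be verified as identities of the birational maps on the full phase space (as you offer in the alternative), since their validity on the linear parameter action alone is automatic and carries no information about the $(x,y,z,w,q,p)$-part of the transformations.
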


\begin{theorem}\label{3.5}
Let us consider a polynomial Hamiltonian system with Hamiltonian $H \in {\Bbb C}(t)[x,y,z,w,q,p]$. We assume that

$(A1)$ $deg(H)=4$ with respect to $x,y,z,w,q,p$.

$(A2)$ This system becomes again a polynomial Hamiltonian system in each coordinate $r_i \ (i=0,1,3,4,5)${\rm : \rm}
\begin{align*}
r_0&:x_0=-(xy-\alpha_0)y, \ y_0=1/y, \ z_0=z, \ w_0=w, \ q_0=q, \ p_0=p, \\
r_1&:x_1=-((x+1)y-\alpha_1)y, \ y_1=1/y, \ z_1=z, \ w_1=w, \ q_1=q, \ p_1=p, \\
r_3&:x_3=x, \ y_3=y, \ z_3=-((z-q)w-\alpha_3)w, \ w_3=1/w, \ q_3=q, \ p_3=p+w,\\
r_4&:x_4=x, \ y_4=y, \ z_4=z, \ w_4=w, \ q_4=1/q, \ p_4=-(pq+\alpha_4)q, \\
r_5&:x_5=x, \ y_5=y, \ z_5=z, \ w_5=w. \ q_5=q, \ p_5=p-\frac{2\alpha_5}{q}+\frac{t}{q^2}.
\end{align*}

$(A3)$ In addition to the assumption $(A2)$, the Hamiltonian system in the coordinate system $(x_0,y_0,z_0,w_0,q_0,p_0)$ becomes again a polynomial Hamiltonian system in the coordinate $r_2${\rm : \rm}
\begin{align*}
r_2:x_2=1/x_0, \ y_2=-((y_0+w_0)x_0+\alpha_2)x_0, \ z_2=z_0-x_0, \ w_2=w_0, \ q_2=q_0, \ p_2=p_0.
\end{align*}
Then such a system coincides with the system \eqref{10}.
\end{theorem}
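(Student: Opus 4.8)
The plan is to follow the holomorphy argument already used for Theorems~\ref{2.2} and \ref{3.2}. By $(A1)$ I first write the unknown Hamiltonian as the most general polynomial of degree four,
$$
H=\sum_{|\mathbf{i}|\le 4}c_{\mathbf{i}}(t)\,x^{i_1}y^{i_2}z^{i_3}w^{i_4}q^{i_5}p^{i_6},\qquad c_{\mathbf{i}}\in{\Bbb C}(t),
$$
with finitely many unknown coefficient functions $c_{\mathbf{i}}(t)$. Each birational change $r_i$ in $(A2)$ and $(A3)$ is symplectic, so passing to the chart $r_i$ replaces $H$ by its expression in the new variables, augmented --- only when $r_i$ depends explicitly on $t$ --- by the time-correction term coming from its generating function. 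The requirement that the transformed system again be a \emph{polynomial} Hamiltonian system is equivalent to demanding that this transformed Hamiltonian have no polar part in the new variables. Since each $r_i$ inverts a single variable (possibly after an affine shift), that polar part is a Laurent tail whose vanishing imposes finitely many linear equations on the $c_{\mathbf{i}}(t)$. The proof then consists in showing that, imposed simultaneously, these equations force $H$ to equal \eqref{11} up to an additive function of $t$ alone, which does not affect \eqref{1}.

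I would impose the conditions block by block, using the fact that the charts only partially couple the variables. First, $r_0$ and $r_1$ act solely on $(x,y)$ by $y\mapsto1/y$ combined with the $\alpha_0$- and $\alpha_1$-shifts; requiring holomorphy in both charts constrains the $(x,y)$-dependence so severely that the pure $(x,y)$-part must equal $H_1(x,y,t;\alpha_0,\alpha_1)$, with $y$ coupling to the remaining variables only through low-degree terms. Next, $r_4$ treats $(q,p)$ by $q\mapsto1/q$ with the $\alpha_4$-shift, while $r_3$ couples $(z,w)$ to $(q,p)$ through $w\mapsto1/w$ together with the substitutions attached to $z-q$ and $p+w$; because this substitution also sends $p\mapsto p-1/w_3$, the poles it creates in the $q^2p^2$-part must be cancelled by the $(z,w)$-block and by the cross term, which pins the $(z,w)$-block to the $H_{III}^{D_7^{(1)}}$ form (including its inhomogeneous $tw$-term) and fixes the coupling $2wq(qp+\alpha_4)/t$. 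Finally the secondary chart $r_2$ of $(A3)$, which inverts $x_0$ and couples $x,y$ to $z,w$ via $y_0+w_0$ and $z_0-x_0$, forces the remaining coupling $2xz/t$.

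The delicate step, and the one I expect to be the main obstacle, is the chart $r_5$, the only explicitly $t$-dependent transformation: $p_5=p-2\alpha_5/q+t/q^2$. Here holomorphy is \emph{not} merely the absence of poles of $H$ in the new variables; one must add the correction produced by the $t$-dependence of $r_5$ before testing for poles. Tracking this correction is precisely what feeds an inhomogeneous $t$-linear contribution into the Hamiltonian and is responsible for the $tp$-term of the $(q,p)$-block $H_2(q,p,t;\alpha_5)$. I would therefore treat $r_5$ last, after the earlier charts have already reduced the admissible $(q,p)$-dependence to a short list of monomials, so that the bookkeeping --- the second-order pole in $q_5$ coming from $t/q_5^2$ must cancel against the poles generated by substituting $p=p_5+2\alpha_5/q_5-t/q_5^2$ into the quadratic-in-$p$ part of $H$, together with the explicit time-correction --- stays manageable.

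Once all these linear constraints are assembled, the coefficient functions $c_{\mathbf{i}}(t)$ are determined uniquely up to the free parameters $\alpha_0,\dots,\alpha_5$ (subject to \eqref{12}) and an irrelevant additive function of $t$. Reading off the surviving monomials reproduces the Hamiltonian \eqref{11}, and hence the system \eqref{10}, as claimed. As with Theorems~\ref{3.1} and \ref{3.2}, the residual verification is a direct, if lengthy, computation.
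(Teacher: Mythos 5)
Your proposal is correct and is essentially the paper's own approach: the paper gives no written argument beyond the remark that Theorems \ref{3.4} and \ref{3.5} ``can be checked by a direct calculation,'' and that calculation is precisely what you outline --- expand a general degree-four Hamiltonian with coefficients in ${\Bbb C}(t)$, impose polynomiality in each chart $r_i$ as linear conditions on those coefficients, and solve to recover \eqref{11} up to an additive function of $t$. You also correctly isolate the one genuinely delicate point, namely that the $t$-dependent chart $r_5$ requires adding the generating-function correction ($1/q_5$) before testing for poles, without which even the target Hamiltonian $H_2(q,p,t;\alpha_5)$ would appear to fail the holomorphy test.
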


Theorems \ref{3.4} and \ref{3.5} can be checked by a direct calculation, respectively.

\begin{theorem}\label{3.6}
For the system \eqref{4} of type $D_5^{(1)}$, we make the change of parameters and variables
\begin{gather}
\begin{gathered}\label{13}
A_0=\alpha_0, \quad A_1=\alpha_1, \quad A_2=\alpha_2, \quad A_3=\alpha_3, \quad A_4=\alpha_4, \quad A_5=\frac{\alpha_5-\alpha_4}{2},\\
\end{gathered}\\
\begin{gathered}\label{14}
X=x, \quad Y=y, \quad Z=z, \quad W=w, \quad Q=\frac{1}{q}, \quad P=-(pq+\alpha_4)q
\end{gathered}
\end{gather}
from $\alpha_0,\alpha_1, \dots ,\alpha_5,x,y,z,w,q,p$ to $A_0,A_1,\dots ,A_5,X,Y,Z,W,Q,P$. Then the system \eqref{4} can also be written in the new variables $X,Y,Z,W,Q,P$ and parameters $A_0,A_1,\dots ,A_5$ as a Hamiltonian system. This new system tends to the system \eqref{10} with the Hamiltonian \eqref{11}.
\end{theorem}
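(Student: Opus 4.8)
The plan is to follow the same three-step strategy used in the proof of Theorem \ref{3.3}, now realizing the folding of the $D_5^{(1)}$ Dynkin diagram at the $(4,5)$-fork rather than the $(0,1)$-fork. First I would confirm that the new parameters satisfy the defining relation \eqref{12} of the target system: substituting \eqref{13} and using $2A_5=\alpha_5-\alpha_4$ together with \eqref{3},
$$
A_0+A_1+2A_2+2A_3+2A_4+2A_5=\alpha_0+\alpha_1+2\alpha_2+2\alpha_3+\alpha_4+\alpha_5=1.
$$
Next I would check that \eqref{14} is symplectic; since $X,Y,Z,W$ coincide with $x,y,z,w$, only the pair $(Q,P)$ is at issue, and a direct computation with $Q=1/q$, $P=-(pq+\alpha_4)q$ gives $dQ\wedge dP=dq\wedge dp$. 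I note that \eqref{14} is exactly the coordinate change $r_4$ appearing in the holomorphy conditions of Theorems \ref{2.2} and \ref{3.5}, which already indicates that the polynomial structure is preserved.

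Second, I would substitute the inverse transformation $q=1/Q$, $p=-PQ^2-\alpha_4 Q$ into the Hamiltonian \eqref{2} and simplify. Since \eqref{14} is time-independent and symplectic, the transformed system is again Hamiltonian with the substituted function as its Hamiltonian, so no correction term is incurred. I expect the $(x,y,z,w)$-blocks and the term $2xz/t$ to pass through unchanged, while the $(q,p)$-block together with the cross term $-2wp/t$ of \eqref{2} recombine: the latter becomes $2WQ(QP+A_4)/t$, matching the cross term of \eqref{11}, and the former collapses to $\{Q^2P^2-(2A_5-1)QP+tP\}/t$ once the $\pm\alpha_4 t/Q$ contributions cancel. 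A residual additive term depending only on $t$ and the parameters will appear; as its phase-space derivatives vanish, it does not affect the equations of motion, so the transformed Hamiltonian defines the same system as \eqref{11}.

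The final and most delicate step is the identification of the B\"acklund group. Mirroring Theorem \ref{3.3}, I would set
$$
S_0:=s_0,\ S_1:=s_1,\ S_2:=s_2,\ S_3:=s_3,\ S_4:=s_4,\ S_5:=\pi_2,\ \phi:=\pi_1,
$$
and verify that, transported through \eqref{14}, these realize the generators of $\tilde{W}(B_5^{(1)})$ listed in Theorem \ref{3.4}. The reflections $S_0,S_1,S_2$ act trivially on $(q,p)$ and carry over verbatim; $S_3=s_3$ requires the rewriting $zq-1=(Z-Q)/Q$ to convert the $D_5^{(1)}$ denominators into the $Z-Q$ denominators of the $B_5^{(1)}$ form; $S_4=s_4$ becomes $Q\mapsto Q+A_4/P$ after simplifying $q+\alpha_4/p$; and $\phi=\pi_1$ becomes the sign-change $(X,Y,Z,W,Q,P)\mapsto(-X-1,-Y,-Z,-W,-Q,-P)$ after noting $1/q\mapsto-Q$ and $P\mapsto-P$. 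The hard part is $S_5:=\pi_2$: although $\pi_2$ is only a diagram automorphism of $D_5^{(1)}$, its conjugate by \eqref{14} is genuinely nontrivial, because the transformation \eqref{14} depends on $\alpha_4$ while $\pi_2$ swaps $\alpha_4\leftrightarrow\alpha_5$. Careful bookkeeping of this interaction should yield $P\mapsto P-2A_5/Q+t/Q^2$, $t\mapsto-t$ with parameter action $A_4\mapsto A_4+2A_5$, $A_5\mapsto-A_5$, which is precisely the short-root reflection $s_5$ of Theorem \ref{3.4}. Once all seven generators are matched, $<S_0,\dots,S_5,\phi>=\tilde{W}(B_5^{(1)})$ and the proof is complete.
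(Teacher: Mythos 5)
Your proposal is correct and follows essentially the same route as the paper's own (much terser) proof: verify the parameter normalization $A_0+A_1+2A_2+2A_3+2A_4+2A_5=1$, note that \eqref{14} is symplectic, and identify the generators via exactly the same assignment $S_0:=s_0,\ S_1:=s_1,\ S_2:=s_2,\ S_3:=s_3,\ S_4:=s_4,\ S_5:=\pi_2,\ \phi:=\pi_1$. Your additional details (the explicit Hamiltonian substitution with the cancelling $\pm\alpha_4 t/Q$ terms and the conjugation computations for $s_3$, $s_4$, $\pi_2$, $\pi_1$, including the subtle interaction of $\pi_2$ with the $\alpha_4$-dependence of \eqref{14}) are all accurate and simply flesh out what the paper leaves as a direct calculation.
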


\begin{proof}
Notice that
$$
A_0+A_1+2A_2+2A_3+2A_4+2A_5=\alpha_0+\alpha_1+2\alpha_2+2\alpha_3+\alpha_4+\alpha_5=1
$$
and the change of variables from $(x,y,z,w,q,p)$ to $(X,Y,Z,W,Q,P)$ is symplectic. Choose $S_i \ (i=0,1,\dots ,5)$ and $\varphi$ as
$$
S_0:=s_0, \ S_1:=s_1, \ S_2:=s_2, \ S_3:=s_3, \ S_4:=s_4, \ S_5:=\pi_2, \ \phi:=\pi_1.
$$
Then the transformations $S_i$ are reflections of the parameters $A_0,A_1,\dots ,A_5$. The transformation group $\tilde{W}(B_5^{(1)})=<S_0,S_1,\dots ,S_5,\phi>$ coincides with the transformations given in Theorem \ref{3.4}.
\end{proof}

By using Theorems \ref{3.3} and \ref{3.6}, we show the relation between the system \eqref{5} and the system \eqref{10}.
\begin{theorem}\label{3.7}
For the system \eqref{5} of type $B_5^{(1)}$, we make the change of parameters and variables
\begin{gather}
\begin{gathered}\label{15}
A_0=\alpha_1, \quad A_1=2\alpha_0+\alpha_1, \quad A_2=\alpha_2, \quad A_3=\alpha_3, \quad A_4=\alpha_4, \quad A_5=\frac{\alpha_5-\alpha_4}{2},\\
\end{gathered}\\
\begin{gathered}\label{16}
X=-(xy-\alpha_1)y, \quad Y=\frac{1}{y}, \quad Z=z, \quad W=w, \quad Q=\frac{1}{q}, \quad P=-(pq+\alpha_4)q
\end{gathered}
\end{gather}
from $\alpha_0,\alpha_1, \dots ,\alpha_5,x,y,z,w,q,p$ to $A_0,A_1,\dots ,A_5,X,Y,Z,W,Q,P$. Then the system \eqref{5} can also be written in the new variables $X,Y,Z,W,Q,P$ and parameters $A_0,A_1,\dots ,A_5$ as a Hamiltonian system. This new system tends to the system \eqref{10} with the Hamiltonian \eqref{11}.
\end{theorem}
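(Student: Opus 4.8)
The plan is to obtain the asserted map as a composition of the two transformations already constructed, rather than by a direct substitution into \eqref{5}. Both Theorem \ref{3.3} and Theorem \ref{3.6} relate the $D_5^{(1)}$ system \eqref{4} to a $B_5^{(1)}$ system: Theorem \ref{3.3} supplies a birational symplectic transformation $\Phi_1$ taking \eqref{4} to the first system \eqref{5}, and Theorem \ref{3.6} supplies a birational symplectic transformation $\Phi_2$ taking \eqref{4} to the second system \eqref{10}. Since \eqref{4} is their common source, the composite $\Phi_2\circ\Phi_1^{-1}$ is a birational symplectic transformation from \eqref{5} to \eqref{10}; the whole content of the theorem is that this composite coincides with the explicit map \eqref{15}--\eqref{16}.

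First I would invert $\Phi_1$. Denote by $\alpha_i'$ and $(x',y',z',w',q',p')$ the parameters and variables of \eqref{4}, and by $\alpha_i$, $(x,y,z,w,q,p)$ those of \eqref{5}. Solving \eqref{9} gives $y'=1/y$ and $x'=\alpha_1 y-xy^2$, with $z',w',q',p'$ equal to $z,w,q,p$, while inverting \eqref{8} gives $\alpha_0'=\alpha_1$, $\alpha_1'=2\alpha_0+\alpha_1$ and $\alpha_i'=\alpha_i$ for $i\ge 2$. The identity $\alpha_0'=\alpha_1$ is exactly what turns the combination $-(x'y'-\alpha_0')y'$ of \eqref{9} into the $-(xy-\alpha_1)y$ of \eqref{16}. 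Substituting these expressions into $\Phi_2$, i.e. into \eqref{13}--\eqref{14}, I expect to recover \eqref{15}--\eqref{16}: the $(x,y)$-block reproduces $X=-(xy-\alpha_1)y$, $Y=1/y$; the $(z,w)$-block remains the identity; and the $(q,p)$-block yields $Q=1/q$, $P=-(pq+\alpha_4)q$ because $\alpha_4'=\alpha_4$. On the parameter side the composite is visibly \eqref{15}.

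It then remains to check consistency of the parameter constraint and of the Hamiltonian. Inserting \eqref{15} into the left-hand side of \eqref{12} gives $\alpha_1+(2\alpha_0+\alpha_1)+2\alpha_2+2\alpha_3+2\alpha_4+(\alpha_5-\alpha_4)=2\alpha_0+2\alpha_1+2\alpha_2+2\alpha_3+\alpha_4+\alpha_5$, which equals $1$ by the relation \eqref{7} satisfied by \eqref{5}. Because \eqref{9}, \eqref{14}, and hence their composite, are $t$-independent and symplectic, the Hamiltonian transforms by plain substitution with no added function of $t$; since $\Phi_1$ carries the Hamiltonian \eqref{2} of \eqref{4} to \eqref{6} and $\Phi_2$ carries \eqref{2} to \eqref{11}, the composite carries \eqref{6} to \eqref{11}. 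Thus $\Phi_2\circ\Phi_1^{-1}$ carries the system \eqref{5} to the system \eqref{10} with Hamiltonian \eqref{11}, as claimed.

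The main obstacle is purely the bookkeeping of the nonlinear inversion: one must correctly solve $X=-(xy-\alpha_0)y$, $Y=1/y$ from \eqref{9} and then verify that the parameter $\alpha_0'$ of \eqref{4} threads through $\Phi_2$ so as to land on the shifted parameter $\alpha_1$ inside the formula for $X$ in \eqref{16}, and likewise that the common value $\alpha_4'=\alpha_4$ is the one appearing in $P=-(pq+\alpha_4)q$. Once the composite is identified with \eqref{15}--\eqref{16}, the statement follows formally from Theorems \ref{3.3} and \ref{3.6}, with no further direct calculation on \eqref{5} needed.
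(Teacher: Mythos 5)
Your proposal is correct and takes exactly the paper's route: the paper states Theorem \ref{3.7} with no separate proof, deriving it (as announced in the sentence ``By using Theorems \ref{3.3} and \ref{3.6}\dots'') precisely by composing the inverse of the transformation \eqref{8}--\eqref{9} with the transformation \eqref{13}--\eqref{14}, which is your $\Phi_2\circ\Phi_1^{-1}$. Your explicit inversion ($y'=1/y$, $x'=-(xy-\alpha_1)y$, $\alpha_0'=\alpha_1$, $\alpha_1'=2\alpha_0+\alpha_1$) and the consistency checks on the constraint \eqref{7} versus \eqref{12} and on the Hamiltonians are the details the paper leaves implicit, and they are carried out correctly.
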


\section{The system of type $D_6^{(2)}$ }

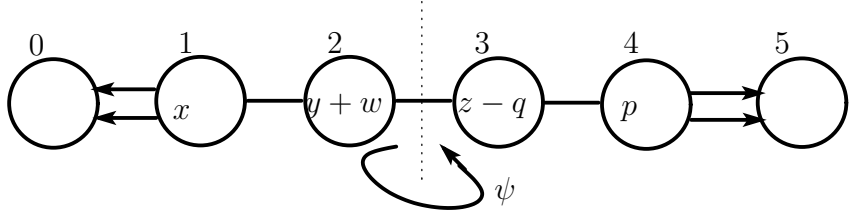
\begin{figure}[h]
\unitlength 0.1in
\begin{picture}(43.82,10.82)(16.10,-17.42)
%
\special{pn 20}%
\special{ar 2612 1182 232 232  0.0000000 6.2831853}%
%
\special{pn 20}%
\special{ar 1842 1194 232 232  0.0000000 6.2831853}%
%
\special{pn 20}%
\special{ar 4172 1189 232 232  0.0000000 6.2831853}%
\put(24.6600,-12.7600){\makebox(0,0)[lb]{$x$}}%
\put(39.5600,-12.7000){\makebox(0,0)[lb]{$z-q$}}%
%
\special{pn 20}%
\special{pa 4415 1190}%
\special{pa 4695 1190}%
\special{fp}%
%
\special{pn 20}%
\special{ar 4945 1200 232 232  0.0000000 6.2831853}%
%
\special{pn 20}%
\special{ar 5760 1190 232 232  0.0000000 6.2831853}%
%
\special{pn 20}%
\special{pa 5175 1140}%
\special{pa 5535 1140}%
\special{fp}%
\special{sh 1}%
\special{pa 5535 1140}%
\special{pa 5468 1120}%
\special{pa 5482 1140}%
\special{pa 5468 1160}%
\special{pa 5535 1140}%
\special{fp}%
\put(48.1500,-12.9000){\makebox(0,0)[lb]{$p$}}%
\put(17.1300,-9.4000){\makebox(0,0)[lb]{$0$}}%
\put(24.9400,-9.3000){\makebox(0,0)[lb]{$1$}}%
\put(40.4500,-9.3000){\makebox(0,0)[lb]{$3$}}%
\put(48.2500,-9.3000){\makebox(0,0)[lb]{$4$}}%
\put(56.1500,-9.3000){\makebox(0,0)[lb]{$5$}}%
%
\special{pn 20}%
\special{pa 2860 1180}%
\special{pa 3140 1180}%
\special{fp}%
%
\special{pn 20}%
\special{pa 2370 1120}%
\special{pa 2070 1120}%
\special{fp}%
\special{sh 1}%
\special{pa 2070 1120}%
\special{pa 2137 1140}%
\special{pa 2123 1120}%
\special{pa 2137 1100}%
\special{pa 2070 1120}%
\special{fp}%
%
\special{pn 20}%
\special{pa 2380 1270}%
\special{pa 2080 1270}%
\special{fp}%
\special{sh 1}%
\special{pa 2080 1270}%
\special{pa 2147 1290}%
\special{pa 2133 1270}%
\special{pa 2147 1250}%
\special{pa 2080 1270}%
\special{fp}%
%
\special{pn 20}%
\special{pa 5180 1280}%
\special{pa 5540 1280}%
\special{fp}%
\special{sh 1}%
\special{pa 5540 1280}%
\special{pa 5473 1260}%
\special{pa 5487 1280}%
\special{pa 5473 1300}%
\special{pa 5540 1280}%
\special{fp}%
%
\special{pn 20}%
\special{pa 3636 1420}%
\special{pa 3596 1427}%
\special{pa 3559 1435}%
\special{pa 3525 1444}%
\special{pa 3496 1455}%
\special{pa 3475 1469}%
\special{pa 3463 1487}%
\special{pa 3460 1508}%
\special{pa 3467 1532}%
\special{pa 3482 1558}%
\special{pa 3504 1585}%
\special{pa 3532 1612}%
\special{pa 3565 1638}%
\special{pa 3602 1662}%
\special{pa 3643 1683}%
\special{pa 3686 1701}%
\special{pa 3730 1716}%
\special{pa 3775 1727}%
\special{pa 3820 1735}%
\special{pa 3864 1740}%
\special{pa 3906 1742}%
\special{pa 3945 1741}%
\special{pa 3982 1737}%
\special{pa 4014 1730}%
\special{pa 4041 1721}%
\special{pa 4063 1709}%
\special{pa 4078 1694}%
\special{pa 4086 1677}%
\special{pa 4086 1658}%
\special{pa 4077 1637}%
\special{pa 4061 1614}%
\special{pa 4040 1589}%
\special{pa 4014 1564}%
\special{pa 3986 1538}%
\special{pa 3966 1520}%
\special{sp}%
%
\special{pn 20}%
\special{pa 3996 1540}%
\special{pa 3886 1430}%
\special{fp}%
\special{sh 1}%
\special{pa 3886 1430}%
\special{pa 3919 1491}%
\special{pa 3924 1468}%
\special{pa 3947 1463}%
\special{pa 3886 1430}%
\special{fp}%
\put(41.5000,-17.3000){\makebox(0,0)[lb]{$\psi$}}%
%
\special{pn 8}%
\special{pa 3770 660}%
\special{pa 3770 1590}%
\special{dt 0.045}%
\special{pa 3770 1590}%
\special{pa 3770 1589}%
\special{dt 0.045}%
%
\special{pn 20}%
\special{ar 3392 1182 232 232  0.0000000 6.2831853}%
\put(31.6000,-12.7000){\makebox(0,0)[lb]{$y+w$}}%
\put(32.7400,-9.3000){\makebox(0,0)[lb]{$2$}}%
%
\special{pn 20}%
\special{pa 3640 1180}%
\special{pa 3920 1180}%
\special{fp}%
\end{picture}%
\label{fig:D56}
\caption{Dynkin diagram of type $D_6^{(2)}$}
\end{figure}

In this section, we propose a 5-parameter family of coupled Painlev\'e III systems in dimension six with extended affine Weyl group symmetry of type $D_6^{(2)}$ given by
\begin{equation}\label{17}
  \left\{
  \begin{aligned}
   \frac{dx}{dt} &=\frac{2x^2y+2\alpha_0x}{t}-\frac{2(2xyz-\alpha_1z)}{t},\\
   \frac{dy}{dt} &=-\frac{2xy^2+2\alpha_0y-1}{t}+\frac{2y^2z}{t},\\
   \frac{dz}{dt} &=\frac{2z^2w+2(\alpha_0+\alpha_1+\alpha_2)z+t}{t}+\frac{2q(qp+\alpha_4)}{t},\\
   \frac{dw}{dt} &=-\frac{2zw^2+2(\alpha_0+\alpha_1+\alpha_2)w+1}{t}+\frac{2y(xy-\alpha_1)}{t},\\
   \frac{dq}{dt} &=\frac{2q^2p-(2\alpha_5-1)q+t}{t}+\frac{2wq^2}{t},\\
   \frac{dp}{dt} &=\frac{-2qp^2+(2\alpha_5-1)p}{t}-\frac{2(2wqp+\alpha_4w)}{t}
   \end{aligned}
  \right. 
\end{equation}
with the Hamiltonian
\begin{align}\label{18}
\begin{split}
H &=H_3(x,y,t;\alpha_0)+H_{III}^{D_7^{(1)}}(z,w,t,2(\alpha_0+\alpha_1+\alpha_2))\\
&+H_2(q,p,t;\alpha_5)+\frac{2(wq(qp+\alpha_4)-yz(xy-\alpha_1))}{t}\\
&=\frac{x^2y^2+2\alpha_0xy-x}{t}+\frac{z^2w^2+2(\alpha_0+\alpha_1+\alpha_2)zw+z+tw}{t}\\
&+\frac{q^2p^2-(2\alpha_5-1)qp+tp}{t}+\frac{2(wq(qp+\alpha_4)-yz(xy-\alpha_1))}{t}.
\end{split}
\end{align}
Here $x,y,z,w,q$ and $p$ denote unknown complex variables and $\alpha_0,\alpha_1, \dots ,\alpha_5$ are complex parameters satisfying the relation:
\begin{equation}\label{19}
\alpha_0+\alpha_1+\alpha_2+\alpha_3+\alpha_4+\alpha_5=\frac{1}{2}.
\end{equation}

\begin{theorem}\label{4.1}
The system \eqref{17} admits extended affine Weyl group symmetry of type $D_6^{(2)}$ as the group of its B{\"a}cklund transformations {\rm (cf. \cite{NY2,S,T}), \rm} whose generators are explicitly given as follows{\rm : \rm}
\begin{align*}
        s_{0}: (*) &\rightarrow (-x-\frac{2\alpha_0}{y}+\frac{1}{y^2},-y,-z,-w,-q,-p,-t;-\alpha_0,\alpha_1+2\alpha_0,\alpha_2,\alpha_3,\alpha_4,\alpha_5), \\
        s_{1}: (*) &\rightarrow (x,y-\frac{\alpha_1}{x},z,w,q,p,t;\alpha_0+\alpha_1,-\alpha_1,\alpha_2+\alpha_1,\alpha_3,\alpha_4,\alpha_5), \\
        s_{2}: (*) &\rightarrow  (x+\frac{\alpha_2}{y+w},y,z+\frac{\alpha_2}{y+w},w,q,p,t;\alpha_0,\alpha_1+\alpha_2,-\alpha_2,\alpha_3+\alpha_2,\alpha_4,\alpha_5), \\
        s_{3}: (*) &\rightarrow (x,y,z,w-\frac{\alpha_3}{z-q},q,p+\frac{\alpha_3}{z-q},t;\alpha_0,\alpha_1,\alpha_2+\alpha_3,-\alpha_3,\alpha_4+\alpha_3,\alpha_5),\\
        s_{4}: (*) &\rightarrow (x,y,z,w,q+\frac{\alpha_4}{p},p,t;\alpha_0,\alpha_1,\alpha_2,\alpha_3+\alpha_4,-\alpha_4,\alpha_5+\alpha_4),\\
        s_{5}: (*) &\rightarrow (x,y,z,w,q,p-\frac{2\alpha_5}{q}+\frac{t}{q^2},-t;\alpha_0,\alpha_1,\alpha_2,\alpha_3,\alpha_4+2\alpha_5,-\alpha_5), \\
        \psi: (*) &\rightarrow (-tp,\frac{q}{t},tw,-\frac{z}{t},-ty,\frac{x}{t},-t;\alpha_5,\alpha_4,\alpha_3,\alpha_2,\alpha_1,\alpha_0).
\end{align*}
\end{theorem}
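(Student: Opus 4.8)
The plan is to split the assertion into two independent parts. First, one checks that each of $s_0,\dots,s_5$ and $\psi$ is a B\"acklund transformation of \eqref{17}: a birational map that is symplectic (after the indicated inversion $t\mapsto -t$ where it occurs) and that carries the Hamiltonian flow of \eqref{17} with parameters $(\alpha_0,\dots,\alpha_5)$ onto the flow of the \emph{same} system with the displayed parameters. Second, one checks that the seven generators satisfy the defining relations of the extended affine Weyl group $\tilde W(D_6^{(2)})$ encoded by the Dynkin diagram of Figure~\ref{fig:D56}.

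For the first part I would argue generator by generator. Each map is visibly birational, so the content is (i) symplecticity of $dx\wedge dy+dz\wedge dw+dq\wedge dp$, where for the maps $s_0,s_5,\psi$ that also send $t\mapsto -t$ one must keep the explicit $t$-dependence and account for the extra term that the time-dependence and the inversion contribute to the Hamiltonian; and (ii) the identity $\Phi^\ast H=\tilde H+(\text{a function of }t\text{ alone})$, where $\tilde H$ is the Hamiltonian \eqref{18} with the transformed parameters, so that the canonical equations \eqref{17} are exactly reproduced. Equivalently, since the generators are precisely the birational charts along which the flow is required to stay polynomial, this step amounts to verifying that \eqref{18} remains polynomial in each coordinate system $r_i$ attached to $s_i$ --- the holomorphy data that would furnish the companion characterization theorem for \eqref{17}. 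The reflections $s_1,s_2,s_3,s_4$ fix $t$ and are supported on one or two canonical pairs, so their check is short; the quadratic reflections $s_0$ and $s_5$ (each with $t\mapsto -t$, attached to the short roots at the two double bonds) and the global scaling $\psi$ are where the computation is heaviest.

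For the second part I would read the Cartan data off Figure~\ref{fig:D56}: double bonds $0\Leftarrow 1$ and $4\Rightarrow 5$, single bonds $1\!-\!2,\ 2\!-\!3,\ 3\!-\!4$, and no further edges. The generators are presented in Noumi--Yamada universal form, with the node functions $f_1=x,\ f_2=y+w,\ f_3=z-q,\ f_4=p$ governing the single reflections and the quadratic expressions of $s_0,s_5$ governing the double-bond nodes; consequently the relations to be established are exactly $s_i^2=\mathrm{id}$, the commutations $(s_is_j)^2=\mathrm{id}$ for disjoint nodes, the braid relations $(s_1s_2)^3=(s_2s_3)^3=(s_3s_4)^3=\mathrm{id}$, the order-four relations $(s_0s_1)^4=(s_4s_5)^4=\mathrm{id}$, and $\psi^2=\mathrm{id}$ together with $\psi s_i\psi^{-1}=s_{5-i}$. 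On the parameter side each relation is immediate from the displayed affine-linear action (for instance $s_0:\alpha_1\mapsto\alpha_1+2\alpha_0$ and $s_1:\alpha_0\mapsto\alpha_0+\alpha_1$ give $a_{01}a_{10}=2$, whence order four at the $(0,1)$ bond), so the substance is to confirm each relation for the action on $x,y,z,w,q,p$.

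I expect the two order-four relations $(s_0s_1)^4=\mathrm{id}$ and $(s_4s_5)^4=\mathrm{id}$ to be the main obstacle, since these are the only non-trivial word relations and each chains a $t$-reversing, quadratic generator through four compositions, so the sign of $t$ and the accompanying gauge terms must be propagated carefully; verifying $\psi s_i\psi^{-1}=s_{5-i}$ for the scaling $\psi$ (with $t\mapsto -t$ and all six variables mixed) is of comparable delicacy. To lighten the labour I would exploit that $s_0,s_1,s_2$ coincide verbatim with the type $B_5^{(1)}$ generators of \eqref{5} in Theorem~\ref{3.1}: the birational symplectic substitution $Q=1/q,\ P=-(pq+\alpha_4)q$ (with $A_5=(\alpha_5-\alpha_4)/2$) carries \eqref{5} to \eqref{17} while leaving the $x,y,z,w$ blocks fixed, so the relation $(s_0s_1)^4=\mathrm{id}$ and all relations internal to $\{s_0,s_1,s_2\}$ may be imported from Theorem~\ref{3.1}, leaving the relations involving the transformed generators $s_3,s_4,s_5$ and the automorphism $\psi$ as the genuinely new content to be checked by direct calculation.
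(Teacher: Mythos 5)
Your proposal is correct and is, in substance, the calculation the paper has in mind: the paper's entire proof of Theorem~\ref{4.1} is the remark that it ``can be checked by a direct calculation,'' and your two-part scheme --- each generator sends solutions of \eqref{17} to solutions of the same system with the reflected parameters, and the generators satisfy the Coxeter data read off the $D_6^{(2)}$ diagram ($s_i^2=\mathrm{id}$, order-$3$ braid relations on the single bonds, order-$4$ relations at the two double bonds, $\psi^2=\mathrm{id}$, $\psi s_i\psi=s_{5-i}$, commutation otherwise) --- is exactly what such a verification consists of. Your only genuine departure is the labour-saving import of the $\{s_0,s_1,s_2\}$ block from Theorem~\ref{3.1} via $Q=1/q$, $P=-(pq+\alpha_4)q$, $A_5=(\alpha_5-\alpha_4)/2$; this is sound (it is the composition of the changes of variables of Theorems~\ref{3.3} and~\ref{4.3}, and conjugation by a fixed birational symplectic map preserves group relations), and it parallels what the paper itself does one theorem later: the proof of Theorem~\ref{4.3} realizes \emph{all seven} generators of Theorem~\ref{4.1} as conjugates of the $D_5^{(1)}$ generators of Theorem~\ref{2.1}, namely $S_0=\pi_1$, $S_1=s_0$, $S_2=s_2$, $S_3=s_3$, $S_4=s_4$, $S_5=\pi_2$, $\psi=\pi_1\pi_2\pi_3$, under the change of variables \eqref{20}--\eqref{21}. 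Pushing your shortcut that far would spare you the heavy $s_3,s_4,s_5,\psi$ checks as well, leaving only group-level bookkeeping; if you do, mind the one subtlety your write-up glosses over: the chart $P=-(pq+\alpha_4)q$ depends on $\alpha_4$, so when conjugating a generator that moves $\alpha_4$ (e.g.\ $s_3$, which sends $\alpha_4\mapsto\alpha_4+\alpha_3$) the target chart must be written with the transformed parameter --- that is precisely how $p\mapsto p-\alpha_3z/(zq-1)$ turns into $P\mapsto P+\alpha_3/(Z-Q)$ as displayed in Theorem~\ref{4.1}. (The subtlety is invisible in your import, since $s_0,s_1,s_2$ fix $\alpha_4$ and $\alpha_5$.) One small caution: your aside that the symmetry check is ``equivalent'' to polynomiality of \eqref{18} in the charts $r_i$ of Theorem~\ref{4.2} overstates the link --- deducing B\"acklund transformations from holomorphy requires the general theory cited in \cite{NY2,S} --- but nothing in your plan actually relies on that equivalence.
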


\begin{theorem}\label{4.2}
Let us consider a polynomial Hamiltonian system with Hamiltonian $H \in {\Bbb C}(t)[x,y,z,w,q,p]$. We assume that

$(A1)$ $deg(H)=4$ with respect to $x,y,z,w,q,p$.

$(A2)$ This system becomes again a polynomial Hamiltonian system in each coordinate $r_i \ (i=0,1,\dots,5)${\rm : \rm}
\begin{align*}
r_0&:x_0=x+\frac{2\alpha_0}{y}-\frac{1}{y^2}, \ y_0=y, \ z_0=z, \ w_0=w, \ q_0=q, \ p_0=p, \\
r_1&:x_1=-(xy-\alpha_1)y, \ y_1=1/y, \ z_1=z, \ w_1=w, \ q_1=q, \ p_1=p, \\
r_2&:x_2=1/x, \ y_2=-((y+w)x+\alpha_2)x, \ z_2=z-x, \ w_2=w, \ q_2=q, \ p_2=p, \\
r_3&:x_3=x, \ y_3=y, \ z_3=-((z-q)w-\alpha_3)w, \ w_3=1/w, \ q_3=q, \ p_3=p+w,\\
r_4&:x_4=x, \ y_4=y, \ z_4=z, \ w_4=w, \ q_4=1/q, \ p_4=-(pq+\alpha_4)q, \\
r_5&:x_5=x, \ y_5=y, \ z_5=z, \ w_5=w. \ q_5=q, \ p_5=p-\frac{2\alpha_5}{q}+\frac{t}{q^2}.
\end{align*}
Then such a system coincides with the system \eqref{17}.
\end{theorem}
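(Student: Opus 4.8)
The plan is to run the holomorphy-characterization argument: begin with the most general admissible Hamiltonian and let the six polynomiality requirements of $(A2)$ collapse it onto the Hamiltonian $\eqref{18}$.

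First I would write $H$ as a generic polynomial of degree four,
\[
H=\sum_{i+j+k+l+m+n\le 4}a_{ijklmn}(t)\,x^iy^jz^kw^lq^mp^n,\qquad a_{ijklmn}\in{\Bbb C}(t),
\]
and make two preliminary reductions. A summand depending on $t$ alone may be discarded, since it does not affect the Hamiltonian vector field, so I work modulo ${\Bbb C}(t)$. Each $r_i$ is symplectic, so to transport $H$ into the $r_i$-chart I substitute the inverse of $r_i$ and add the correction term $\partial_t S_i$ coming from the generating function of $r_i$. Inspecting the list, only $r_5$ carries explicit $t$-dependence (through $t/q^2$); hence this correction is inert for $r_0,\dots,r_4$ and contributes the simple pole term $\partial_t(t/q)=1/q$ for $r_5$. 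Condition $(A2)$ then reads: after this transport the result must lie in ${\Bbb C}(t)[x_i,y_i,z_i,w_i,q_i,p_i]$, i.e. every apparent pole cancels.

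Next I would impose the six conditions one block at a time, exploiting that each chart governs holomorphy along a single hypersurface, involving only one critical variable: $y$ for $r_0,r_1$, $x$ for $r_2$, $w$ for $r_3$, and $q$ for $r_4,r_5$. For a fixed chart, re-expanding $H$ in the new coordinates produces a principal (negative-power) part whose required vanishing is a system of linear equations in the $a_{ijklmn}(t)$. I would treat $r_4,r_5$ first to fix the $(q,p)$-dependence to the $H_2$-shape and force the coupling $2wq(qp+\alpha_4)/t$ (which becomes $-wp_4$ in the $r_4$ chart, hence polynomial); then $r_0,r_1$ for the $(x,y)$-block, forcing the $H_3$-shape and the coupling $-2yz(xy-\alpha_1)/t$ (which becomes $-zx_1$ in the $r_1$ chart); the $t$-dependent correction in $r_5$ simultaneously pins down the explicit $t$-dependence of the coefficients, producing the terms $tw$ and $tp$ and the overall factor $1/t$.

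The main obstacle is the two coupling charts $r_2$ and $r_3$, the only ones mixing distinct blocks (via $z_2=z-x$, $y_2=-((y+w)x+\alpha_2)x$ and via $z_3=-((z-q)w-\alpha_3)w$, $p_3=p+w$). Their principal parts entangle coefficients already constrained by the single-block charts, and one must verify that the cross terms $-2yz(xy-\alpha_1)$ and $2wq(qp+\alpha_4)$ are the unique choice rendering all six charts simultaneously holomorphic; tracing how a cancellation forced in the $r_2$ (resp. $r_3$) chart feeds a constraint back into the $r_0,r_1$ (resp. $r_4,r_5$) data, and checking consistency with the middle $H_{III}^{D_7^{(1)}}$-part and its coefficient $2(\alpha_0+\alpha_1+\alpha_2)$, is the delicate bookkeeping; the remaining steps are mechanical. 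Once every condition is solved, the degree-four cap $(A1)$ leaves no residual freedom beyond the already-discarded functions of $t$, so the surviving Hamiltonian is exactly $\eqref{18}$ and the system coincides with $\eqref{17}$, consistent with the normalization $\eqref{19}$.
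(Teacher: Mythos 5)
Your proposal takes essentially the same route as the paper: the paper gives no argument beyond the remark that this theorem ``can be checked by a direct calculation,'' and your plan---generic degree-four Hamiltonian modulo ${\Bbb C}(t)$, chart-by-chart linear pole-cancellation conditions, single-block charts $r_0,r_1,r_4,r_5$ first and the coupling charts $r_2,r_3$ last---is exactly that calculation, correctly organized. You also handle the one genuinely delicate point the paper leaves implicit, namely that $r_5$ is the only time-dependent chart and therefore requires the additive correction $\partial_t(t/q)=1/q$ to the transported Hamiltonian before demanding polynomiality, and your identification of the cross-block cancellations (e.g.\ $q(qp+\alpha_4)=-p_4$ and $y(xy-\alpha_1)=-x_1$, so the coupling terms stay polynomial in the relevant charts) is consistent with what the verification actually requires.
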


Theorems \ref{4.1} and \ref{4.2} can be checked by a direct calculation, respectively.

\begin{theorem}\label{4.3}
For the system \eqref{4} of type $D_5^{(1)}$, we make the change of parameters and variables
\begin{gather}
\begin{gathered}\label{20}
A_0=\frac{\alpha_1-\alpha_0}{2}, \quad A_1=\alpha_0, \quad A_2=\alpha_2, \quad A_3=\alpha_3, \quad A_4=\alpha_4, \quad A_5=\frac{\alpha_5-\alpha_4}{2},
\end{gathered}\\
\begin{gathered}\label{21}
X=-(xy-\alpha_0)y, \quad Y=\frac{1}{y}, \quad Z=z, \quad W=w, \quad Q=\frac{1}{q}, \quad P=-(pq+\alpha_4)q
\end{gathered}
\end{gather}
from $\alpha_0,\alpha_1, \dots ,\alpha_5,x,y,z,w,q,p$ to $A_0,A_1,\dots ,A_5,X,Y,Z,W,Q,P$. Then the system \eqref{4} can also be written in the new variables $X,Y,Z,W,Q,P$ and parameters $A_0,A_1,\dots ,A_5$ as a Hamiltonian system. This new system tends to the system \eqref{17} with the Hamiltonian \eqref{18}.
\end{theorem}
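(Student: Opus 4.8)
The plan is to follow the strategy used for Theorems \ref{3.3} and \ref{3.6}. First I would record that the change of parameters \eqref{20} respects the defining relations: a direct substitution gives
$$A_0+A_1+A_2+A_3+A_4+A_5=\frac{\alpha_0+\alpha_1+2\alpha_2+2\alpha_3+\alpha_4+\alpha_5}{2}=\frac12,$$
so the $A_i$ satisfy the relation \eqref{19} demanded of a $D_6^{(2)}$ system. Next I would check that the change of variables \eqref{21} is symplectic. It is the identity on $(Z,W)=(z,w)$ and factors as the product of the two independent maps $(x,y)\mapsto(X,Y)=(-(xy-\alpha_0)y,1/y)$ and $(q,p)\mapsto(Q,P)=(1/q,-(pq+\alpha_4)q)$, which are precisely the coordinates $r_0$ and $r_4$ of Theorem \ref{2.2}; a one-line computation gives $dX\wedge dY=dx\wedge dy$ and $dQ\wedge dP=dq\wedge dp$, whence $dX\wedge dY+dZ\wedge dW+dQ\wedge dP=dx\wedge dy+dz\wedge dw+dq\wedge dp$. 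Since \eqref{21} does not involve $t$, the Hamiltonian transforms by pure substitution, with no generating-function correction term.

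With symplecticity in hand, the substance is to produce the $D_6^{(2)}$ B\"acklund group from the $D_5^{(1)}$ one. I would set
$$S_0:=\pi_1,\quad S_1:=s_0,\quad S_2:=s_2,\quad S_3:=s_3,\quad S_4:=s_4,\quad S_5:=\pi_2,\quad \psi:=\pi_1\circ\pi_2\circ\pi_3,$$
and verify that, read in the coordinates $(X,Y,Z,W,Q,P)$ and parameters $A_0,\dots,A_5$, each $S_i$ is exactly the simple reflection $s_i$ of Theorem \ref{4.1}. On the parameter side this is a short linear check: for example $\pi_1$ sends $A_0=(\alpha_1-\alpha_0)/2\mapsto -A_0$ and $A_1=\alpha_0\mapsto\alpha_1=A_1+2A_0$, the node-$0$ reflection, while $\pi_2$ sends $A_5\mapsto -A_5$ and $A_4\mapsto A_4+2A_5$, the node-$5$ reflection; the generators $s_0,s_2,s_3,s_4$ reproduce the simple reflections at nodes $1,2,3,4$. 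For the diagram automorphism I would check that $\pi_1\circ\pi_2\circ\pi_3$ induces the full reversal $A_i\mapsto A_{5-i}$ on parameters, matching $\psi$ of Theorem \ref{4.1}. Note here that $\pi_3$ by itself realises only the twisted reversal $A_0\mapsto -A_5$, $A_1\mapsto A_4+2A_5,\dots$, so composing with the two end reflections $\pi_1=S_0$ and $\pi_2=S_5$ is exactly what restores the correct signs.

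It then remains to confirm that these generators, expressed in $(X,Y,Z,W,Q,P)$, coincide on the level of \emph{variables} with the birational maps of Theorem \ref{4.1}, and that the transformed Hamiltonian equals \eqref{18}. I expect the variable-level matching to be the main obstacle: one must substitute \eqref{21} into the generators $\pi_1,s_0,s_2,s_3,s_4,\pi_2$ of Theorem \ref{2.1} and simplify, and in particular compose the three maps $\pi_1,\pi_2,\pi_3$ so that the several sign changes and $t$-rescalings conspire to produce the single transformation $\psi:(*)\mapsto(-tp,\,q/t,\,tw,\,-z/t,\,-ty,\,x/t,\,-t;\dots)$. Once the generator formulas agree with Theorem \ref{4.1}, the transformed system admits $\tilde W(D_6^{(2)})=\langle S_0,\dots,S_5,\psi\rangle$ as its group of B\"acklund transformations; substituting \eqref{21} into the Hamiltonian \eqref{2} and collecting terms via the dictionary \eqref{20} identifies the new Hamiltonian with \eqref{18}, so the new system is exactly \eqref{17}, as claimed.
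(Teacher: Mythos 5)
Your proposal is correct and follows essentially the same route as the paper's own proof: checking the parameter relation \eqref{19}, verifying symplecticity of \eqref{21}, and choosing the generators $S_0:=\pi_1$, $S_1:=s_0$, $S_2:=s_2$, $S_3:=s_3$, $S_4:=s_4$, $S_5:=\pi_2$, $\psi:=\pi_1\pi_2\pi_3$ so that they realise the $D_6^{(2)}$ action of Theorem \ref{4.1}. In fact you supply more detail than the paper (the explicit $dX\wedge dY=dx\wedge dy$, $dQ\wedge dP=dq\wedge dp$ computations and the observation that $\pi_3$ alone gives only a twisted reversal of the $A_i$), which is a welcome refinement of the same argument.
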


\begin{proof}
Notice that
$$
2(A_0+A_1+A_2+A_3+A_4+A_5)=\alpha_0+\alpha_1+2\alpha_2+2\alpha_3+\alpha_4+\alpha_5=1
$$
and the change of variables from $(x,y,z,w,q,p)$ to $(X,Y,Z,W,Q,P)$ is symplectic. Choose $S_i \ (i=0,1,\dots ,5)$ and $\varphi$ as
$$
S_0:=\pi_1, \ S_1:=s_0, \ S_2:=s_2, \ S_3:=s_3, \ S_4:=s_4, \ S_5:=\pi_2, \ \psi:=\pi_1\pi_2\pi_3.
$$
Then the transformations $S_i$ are reflections of the parameters $A_0,A_1,\dots ,A_5$. The transformation group $\tilde{W}(D_6^{(2)})=<S_0,S_1,\dots ,S_5,\psi>$ coincides with the transformations given in Theorem \ref{4.1}.
\end{proof}

\begin{flushleft}

\noindent
{\it \noindent Graduate School of Mathematical Sciences}

\noindent
{\it The University of Tokyo}

\noindent
{\it 3-8-1 Komaba Megro-ku 153-8914 Tokyo Japan}

\noindent
E-mail address: sasano@ms.u-tokyo.ac.jp

\end{flushleft}

\end{document}